\title[Quasi-homomorphism rigidity]{Quasi-homomorphism rigidity\\
with noncommutative targets}
\author[N. Ozawa]{Narutaka OZAWA}
\address{Department of Mathematical Sciences,
University of Tokyo, \mbox{153-8914}}
\email{narutaka@ms.u-tokyo.ac.jp}
\thanks{Partially supported by JSPS and the Hausdorff Research Institute for Mathematics}
\subjclass{Primary 22E40; Secondary 20F99, 43A35}
\keywords{quasi-homomorphism, property (T), property (TT)}
\date{July 31, 2010}
\newtheorem{thm}{Theorem}
\newtheorem{lem}[thm]{Lemma}
\newtheorem{prop}[thm]{Proposition}
\newtheorem{cor}[thm]{Corollary}
\newtheorem{thmA}{Theorem}
\theoremstyle{definition}
\newtheorem*{defn}{Definition}
\newtheorem*{rem}{Remark}
\newcommand{\IR}{{\mathbb R}}
\newcommand{\IC}{{\mathbb C}}
\newcommand{\IN}{{\mathbb N}}
\newcommand{\IZ}{{\mathbb Z}}
\newcommand{\IK}{{\mathbb K}}
\newcommand{\IB}{{\mathbb B}}
\newcommand{\G}{\Gamma}
\newcommand{\e}{\varepsilon}
\newcommand{\p}{\varphi}
\newcommand{\cH}{{\mathcal H}}
\newcommand{\cK}{{\mathcal K}}
\newcommand{\cM}{{\mathcal M}}
\newcommand{\cU}{{\mathcal U}}
\DeclareMathOperator*{\Lim}{Lim}
\DeclareMathOperator*{\esup}{ess-sup}
\newcommand{\cb}{\mathrm{cb}}
\newcommand{\SL}{\mathrm{SL}}
\newcommand{\EL}{\mathrm{EL}}
\newcommand{\ip}[1]{\mathopen{\langle}#1\mathclose{\rangle}}
\DeclareMathOperator{\EXP}{EXP}
\newcommand{\TTT}{\ensuremath{(\mathrm{TTT})}}
\newcommand{\Tp}{\ensuremath{(\mathrm{T_P})}}
\newcommand{\Tq}{\ensuremath{(\mathrm{T_Q})}}
\newcommand{\wh}{\ensuremath{(\mathrm{h})}}
\newcommand{\cF}{{\mathcal F}}
\DeclareMathAlphabet{\mathpzc}{OT1}{pzc}{m}{it}
\newcommand{\cc}{{\mathpzc b}}
\newcommand{\el}{\ell}
\newcommand{\oh}{1/2}
\begin{document}
\begin{abstract}
As a strengthening of Kazhdan's property $\mathrm{(T)}$
for locally compact groups, property $\mathrm{(TT)}$ was
introduced by Burger and Monod.
In this paper, we add more rigidity and introduce property $\mathrm{(TTT)}$.
This property is suited for the study of rigidity phenomena for
quasi-homomorphisms with noncommutative targets.
Partially upgrading a result of Burger and Monod,
we will prove that $\mathrm{SL}_n({\mathbb R})$ with $n\geq3$ and
their lattices have property $\mathrm{(TTT)}$.
As a corollary, we generalize the well-known fact that
every homomorphism from such a lattice into an amenable group or
a hyperbolic group has finite image to the extent that
it includes a quasi-homomorphism.
\end{abstract}
\maketitle
\section{Introduction}\label{sec:intro}
It is proved by Burger and Monod (\cite{burger-monod,burger-monod2}) that
lattices in higher rank Lie groups have property (TT),
a property which strengthens Kazhdan's property (T) and
implies triviality of quasimorphisms.
See \cite{bhv} for a thorough treatment of property (T), and
Section 13 in \cite{monod} for property (TT).
The purpose of this paper is to introduce a yet stronger
variant of property (TT), which we call property \TTT.
Throughout this paper \textbf{all groups are assumed to be second countable}.

\begin{defn}
Let $G$ be a locally compact group, and consider
a Borel map $\cc$ from $G$ into a Hilbert space $\cH$,
together with a Borel map $\pi$ from $G$ into the unitary group $\cU(\cH)$.
We assume that $\cc$ is \emph{locally bounded}, i.e.,
it is bounded on every compact subset.
The map $\cc$ is a \emph{cocycle} if $\pi$ is a representation
and $\cc$ satisfies
$\cc(gh)=\cc(g)+\pi(g)\cc(h)$ for all $g,h\in G$.
It is a \emph{quasi-cocycle} if $\pi$ is a representation
and the \emph{defect of $\cc$} is finite:
\[
\sup_{g,h\in G}\|\cc(gh)-\big(\cc(g)+\pi(g)\cc(h)\big)\|<+\infty.
\]
It is a \emph{wq-cocycle} if the defect is finite
(and no multiplicativity condition on $\pi$).
Recall that $G$ has property $\mathrm{(T)}$ (resp.\ $\mathrm{(TT)}$)
if every cocycle (resp.\ quasi-cocycle) on $G$ is bounded.
We say $G$ has \emph{property \TTT} if every wq-cocycle on $G$ is bounded.
\end{defn}

The study of property {\TTT} is motivated by the following fact.
A map $q\colon G\to G'$ is called a \emph{quasi-homomorphism} if
it is a continuous map (or just Borel and locally bounded)
such that the defect $\{ q(gh)^{-1}q(g)q(h) : g,h\in G \}$
is relatively compact in $G'$.
In the case where $G'=\IR$, quasi-homomorphisms are often
called quasimorphisms and have been studied extensively.
(See \cite{calegari}.)
It is easily seen that the composition $\cc\circ q$ of
a quasi-homomorphism $q\colon G\to G'$ and
a wq-cocycle $\cc\colon G'\to\cH$ is again a wq-cocycle.

\begin{defn}
We say a locally compact group $G$ is \emph{a-TTT-menable}
(or $G$ has \emph{property \wh}) if there is a wq-cocycle $\cc$ on $G$
which is \emph{proper} in the sense that
$\{ g\in G : \|\cc(g)\|\le C \}$ is relatively compact for every $C>0$.
\end{defn}

Groups with proper cocycles (i.e., a-T-menable groups,
also known as groups with Haagerup's property, see \cite{ccjjv})
are a-TTT-menable.
In particular, all amenable groups are a-TTT-menable.
All hyperbolic groups are also a-TTT-menable.
(See Section 7.$\mathrm{E}_1$ in \cite{gromov}. More explicitly,
$\cc(g):=q[1,g]$ in the notation of Theorem 10 in \cite{mineyev}
is a proper quasi-cocycle.)
 From the above discussion, we have the following consequence.

\begin{thmA}\label{amenable}
Let $G$ and $G'$ be locally compact groups
such that $G$ with property {\TTT} and $G'$ a-TTT-menable.
Then, every quasi-homomorphism from $G$ into $G'$ has
a relatively compact image.
\end{thmA}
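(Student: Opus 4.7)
The plan is to exploit the observation already highlighted in the excerpt: the composition of a quasi-homomorphism with a wq-cocycle is again a wq-cocycle. Property \TTT\ of the source and properness of the wq-cocycle on the target then immediately bound the image.

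More precisely, let $q\colon G\to G'$ be a quasi-homomorphism, and let $\cc\colon G'\to\cH$ be a proper wq-cocycle on $G'$, with the associated Borel map $\pi\colon G'\to\cU(\cH)$, coming from the assumption that $G'$ has property \wh. I would first verify in detail that $\cc\circ q\colon G\to\cH$ is a wq-cocycle on $G$ with respect to the Borel map $\pi\circ q\colon G\to\cU(\cH)$. Borel measurability of $\cc\circ q$ is clear, and local boundedness follows because $q$, being locally bounded, sends compact sets of $G$ into relatively compact sets of $G'$, on which $\cc$ is bounded. For the defect, write $r(g,h):=q(gh)^{-1}q(g)q(h)$, so that $\{r(g,h):g,h\in G\}$ is relatively compact in $G'$ and $q(g)q(h)=q(gh)\,r(g,h)$. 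Applying the wq-cocycle inequality for $\cc$ twice gives
\[
\bigl\|\cc(q(g)q(h))-\cc(q(gh))-\pi(q(gh))\cc(r(g,h))\bigr\|\le D,
\]
\[
\bigl\|\cc(q(g)q(h))-\cc(q(g))-\pi(q(g))\cc(q(h))\bigr\|\le D,
\]
where $D$ is the defect of $\cc$. Since $\cc(r(g,h))$ ranges over a bounded set by local boundedness of $\cc$, subtracting yields a uniform bound on
\[
\bigl\|\cc(q(gh))-\cc(q(g))-\pi(q(g))\cc(q(h))\bigr\|,
\]
which is exactly the defect condition for $\cc\circ q$ with respect to $\pi\circ q$.

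Once $\cc\circ q$ is identified as a wq-cocycle on $G$, property \TTT\ gives a constant $C$ with $\|\cc(q(g))\|\le C$ for all $g\in G$. Properness of $\cc$ then forces $q(G)\subseteq\{g'\in G':\|\cc(g')\|\le C\}$ to be relatively compact in $G'$, which is the conclusion.

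I do not foresee a serious obstacle: the only point of care is the algebraic manipulation of the two triangle-type inequalities above, ensuring that the unitary $\pi(q(gh))$ in front of the bounded vector $\cc(r(g,h))$ is harmlessly absorbed into the uniform bound. Everything else is a direct appeal to the definitions of property \TTT, property \wh, and quasi-homomorphism.
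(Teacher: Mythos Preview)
Your proposal is correct and is exactly the argument the paper has in mind: the paper does not give a separate proof of Theorem~A but derives it ``from the above discussion,'' namely the observation that $\cc\circ q$ is a wq-cocycle together with the definitions of property \TTT\ and property \wh. Your detailed verification of the defect bound via the two applications of the wq-cocycle inequality and the local boundedness of $\cc$ on the relatively compact defect set $\{r(g,h)\}$ is precisely the content of that observation.
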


We will prove that the inclusion of an abelian group $A$ into
a semidirect product group $G_0\ltimes A$ has Kazhdan's relative property (T)
if and only if it has relative property {\TTT}.
It follows that the group $\SL_{n\geq3}(\IR)$ has property {\TTT}.
We then make an extra effort to prove that property {\TTT} is
inherited to lattices (under a certain condition).

\begin{thmA}\label{sl}
For any local field $\IK$ and $n\geq3$,
the group $\SL_n(\IK)$ and its lattices have property \TTT.
\end{thmA}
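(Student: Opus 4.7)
My plan is to decompose the proof into three steps, as suggested by the introduction: first prove a relative version of (TTT) for a pair $(G_0 \ltimes A, A)$ from relative (T); second, deduce (TTT) for $\SL_n(\IK)$ by combining this with bounded generation; third, transfer (TTT) to lattices by an induction-type argument.

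For the first step, given a semidirect product $G = G_0 \ltimes A$ with $A$ abelian and having relative (T), I would take an arbitrary wq-cocycle $(\pi,\cc)$ on $G$ and show $\cc|_A$ is bounded. The key point is that $\pi|_A$ need not be a homomorphism, but the bounded defect together with the commutativity of $A$ forces an approximate commutation: for $a,b\in A$, computing $\cc(ab)$ and $\cc(ba)$ in two ways gives $\pi(a)\cc(b) - \cc(b) \approx \pi(b)\cc(a) - \cc(a)$ up to twice the defect. From this, via a spectral disintegration of $\cc|_A$ over the Pontryagin dual $\hat A$, one should produce a genuine unitary representation of $A$ on $\cH$ and a genuine cocycle close to $\cc|_A$; relative (T) then forces boundedness, and equivariance of $\cc$ under the $G_0$-action on $A$ should upgrade this to a relative conclusion on the pair.

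For the second step, the standard block embeddings realize $\SL_2(\IK) \ltimes \IK^2$ inside $\SL_n(\IK)$ for $n \geq 3$, and the pair $(\SL_2(\IK) \ltimes \IK^2, \IK^2)$ has relative property (T). By Step 1, any wq-cocycle is bounded on each such root-abelian subgroup, hence on each one-parameter root subgroup $\IK$. Because $\SL_n(\IK)$ for $n \geq 3$ admits bounded generation by a fixed finite collection of root subgroups (Carter-Keller-Tavgen), the bounded defect can be accumulated only a uniformly bounded number of times, giving a uniform bound on $\cc$ over all of $\SL_n(\IK)$.

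The third step, passage to lattices, is where I expect the main obstacle. For properties (T) and (TT), one induces unitary representations (resp.\ quasi-cocycles) from the lattice $\Gamma$ to the ambient group $G$ and argues at the level of $G$. Here the difficulty is that the Borel map $\pi$ attached to a wq-cocycle on $\Gamma$ is not assumed multiplicative, so the standard induction does not directly produce a wq-cocycle on $G$. My plan would be to first exploit the defect estimates on a carefully chosen generating set $S \subseteq \Gamma$ to replace $\pi|_S$, up to bounded perturbation, by a genuine ``representation up to small error'', then perform induction on this corrected datum, then invoke Step 2 on $G$ and descend to $\Gamma$ via a fundamental-domain selection. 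The ``certain condition'' the author mentions is likely a mild integrability or cocompactness-type hypothesis needed to make this induction converge, and verifying it for lattices in $\SL_n(\IK)$ is where I expect the bulk of the technical work to lie.
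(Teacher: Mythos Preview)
Your three-step outline matches the paper's architecture, and Step~2 (bounded generation by root/elementary subgroups) is essentially what the paper does. However, Steps~1 and~3 diverge from the paper in a way that matters, and both contain genuine gaps as written.

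\textbf{Step 1.} The paper does \emph{not} try to extract a genuine representation of $A$ from $\pi|_A$. Your plan hinges on a ``spectral disintegration of $\cc|_A$ over $\widehat{A}$'', but there is nothing to disintegrate: $\pi|_A$ is an arbitrary Borel map into unitaries, so no spectral theorem applies, and the approximate commutation relation you derive does not by itself produce a representation close to $\pi|_A$. The paper sidesteps this entirely by introducing an auxiliary property \Tp, formulated in terms of almost-invariant positive definite kernels rather than wq-cocycles. It proves $\Tp\Rightarrow\TTT$ by sending a wq-cocycle $\cc$ to the kernel $\theta(x,y)=\exp(-\|\cc(x)-\cc(y)\|^2)$ via the Fock-space exponential, and then shows that relative~(T) for $(G_0\ltimes A,A)$ implies relative~\Tp\ by averaging $\theta$ along a F{\o}lner sequence in $A$ to get an $A$-invariant kernel, hence a measure on $\widehat{A}$, to which a known measure-theoretic characterization of relative~(T) applies.

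\textbf{Step 3.} The paper states explicitly that it is unclear whether \TTT\ itself passes to non-cocompact lattices, precisely because induction of wq-cocycles requires an integrability condition you cannot verify. Your proposed fix---perturbing $\pi$ on a generating set to a near-representation and then inducing---does not address this, and there is no mechanism offered for why such a perturbation exists. The paper instead proves lattice inheritance for \Tp\ (positive definite kernels induce cleanly), and the real work is a new result (Theorem~\ref{length}/\ref{thm:oid}) bounding a subadditive length function on the groupoid $X\times G$ by a coboundary $h(x)+h(g^{-1}x)$; this is proved by convolving with a random walk $\mu$ and invoking double ergodicity of the Poisson boundary (a noncommutative Choquet--Deny theorem). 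None of this apparatus appears in your plan, and it is the substantive content of the lattice step.
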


Unlike the case of property (T), it is not so straightforward to prove
that property {\TTT} is inherited to lattices.
In the case of property (TT), Burger and Monod (\cite{burger-monod,burger-monod2})
use bounded cohomology machinery.
For property {\TTT}, we replace their cohomological theorem
(injectivity of the $L^2$-induction map) with
the following theorem about a length function on a measured transformation groupoid.

\begin{thmA}\label{length}
Let $G\curvearrowright X$ be a measure-preserving action
of a locally compact group $G$ on a standard probability space $X$,
and $\el\colon X\times G\to\IR_{\geq0}$ be a measurable function such that
\[
\el(x,gh) \le \el(x,g) + \el(g^{-1}x,h)
\]
for a.e.\ $(x,g,h) \in X\times G\times G$.
If
\[
\esup_{g\in G}\int_X \el(x,g)\,dx<+\infty,
\]
then there exists $h\in L^1(X)$ such that
\[
\el(x,g)\le h(x)+h(g^{-1}x)
\]
almost everywhere.
\end{thmA}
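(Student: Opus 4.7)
The plan is to cast the existence of $h$ as a linear feasibility problem in $L^1(X)$ and to verify the corresponding Hahn--Banach dual bound by exploiting the subadditivity of $\el$ together with the measure-preservation of the $G$-action. Setting $M:=\esup_{g\in G}\|\el(\cdot,g)\|_{L^1(X)}$, we may assume (after removing a null set of $g$'s) that $\|\el(\cdot,g)\|_{L^1(X)}\le M$ for every $g\in G$. Let
\[
C:=\{h\in L^1(X)_+ : \el(x,g)\le h(x)+h(g^{-1}x)\text{ for a.e.\ }(x,g)\}.
\]
By LP duality in the $L^1(X)$--$L^\infty(X)$ pairing, $C\neq\emptyset$ iff there is a finite constant $K$ such that $\int_{X\times G}\el\,d\rho\le K$ for every positive Radon measure $\rho$ on $X\times G$ whose combined marginal $\pi_{1*}\rho+\pi_{2*}\rho$ on $X$, with $\pi_1(x,g)=x$ and $\pi_2(x,g)=g^{-1}x$, is absolutely continuous with respect to the measure on $X$ and has density at most $1$.

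Verifying this dual bound is the heart of the argument. Heuristically, $\el$ plays the role of a pseudodistance on the orbit groupoid $X\rtimes G$ (with subadditivity as the triangle inequality), and an admissible $\rho$ is a transport plan with bounded marginal density. The strategy is to disintegrate $\rho$ along $G$, use $\el(x,gh)\le\el(x,g)+\el(g^{-1}x,h)$ to factor ``long'' $g$'s into shorter compositions where the uniform $L^1$ bound $M$ and measure-preservation can be applied directly, and telescope the pieces so that the marginal-density constraint controls the total mass employed. A symmetrization preliminary---replacing $\el$ by $\el(x,g)+\el(g^{-1}x,g^{-1})$ to obtain a genuine symmetric pseudometric on the orbit relation $\cR\subset X\times X$---is likely useful, as is ergodic decomposition (to reduce to an ergodic $G$-action) and a Rokhlin-type lemma for the measured groupoid $X\rtimes G$ to produce a usable ``fundamental-domain''-like decomposition of $\rho$. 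When a measurable transversal for $\cR$ is available, one could even set $h(x):=d(x,o(x))$ for a basepoint function $o$ and conclude directly by the triangle inequality; in general one must work with the diffuse, transversal-free situation.

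The main obstacle is this telescoping / dual-bound step. Subadditivity is a local inequality, and transforming it into a uniform global bound on $\int\el\,d\rho$ valid for \emph{every} admissible $\rho$ is delicate: one must weave together measure-preservation, the $L^1$ bound $M$, and the marginal-density constraint into a single quantitative estimate. Secondary technical points include handling the noncompactness of $G$ (so that $\esup$ over $g$ plays the role of a pointwise $\sup$), a modular-function correction when $G$ is non-unimodular, and measurability of the constructed $h$. Once the dual bound is established, the Hahn--Banach direction furnishes $h\in L^1(X)_+$ with $\|h\|_1\le K$, and the asserted a.e.\ inequality follows.
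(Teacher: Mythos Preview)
Your proposal is not a proof; it is an outline in which the decisive step is explicitly left open. You correctly recast feasibility as a dual bound $\int_{X\times G}\el\,d\rho\le K$ for every positive $\rho$ whose combined marginal has density $\le 1$, but you then say that verifying this bound is ``the main obstacle'' and ``delicate'' without actually doing it. The tools you gesture at do not resolve it: there is no natural notion of ``long'' versus ``short'' $g$'s here, so ``factoring'' $g$ as $g_1g_2$ via subadditivity only gives $\int\el(x,g)\,dx\le\int\el(x,g_1)\,dx+\int\el(x,g_2)\,dx\le 2M$, which is no improvement; a measurable transversal for the orbit equivalence relation typically fails to exist (any ergodic action with nonatomic $X$); and a ``Rokhlin-type lemma for $X\rtimes G$'' is too vague to carry weight. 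Moreover, even granting the dual bound, the passage from weak duality to the existence of $h\in L^1(X)_+$ (rather than in some bidual) needs a genuine argument in this infinite-dimensional, non-reflexive setting---``the Hahn--Banach direction furnishes $h$'' is not sufficient as stated.

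The paper takes a completely different route that you should look at: it \emph{constructs} $h$ directly. One fixes a symmetric nondegenerate probability measure $\mu$ on $G$, truncates $\el_R=\min(\el,R)$, and forms the bi-averages $f_R=\Lim_{m,n}\mu^{*m}*\el_R*\mu^{*n}$ in $L^\infty(X\times G)$. The key lemma (a Choquet--Deny/double-ergodicity statement due to Kaimanovich) says that any bounded $f$ with $\mu*f=f=f*\mu$ is a.e.\ equal to a $G$-invariant vector, so $f_R\in L^\infty(X)^G$. Subadditivity then gives both an $L^1$ bound on $f_R$ (uniform in $R$, controlled by $M$) and the pointwise estimate $\el_R(x,g)\le C\bigl(\int\el(x,s)\,d\mu^{*m}(s)+f_R(x)+\int\el(t^{-1}g^{-1}x,t^{-1})\,d\mu^{*n}(t)\bigr)$, from which $h$ is assembled explicitly. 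The random-walk averaging plus double ergodicity is precisely the missing global mechanism that your telescoping heuristic does not supply.
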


When one views a length function $\ell$ as a kind of cocycle,
this theorem says that if a ``cocycle'' is bounded in a certain sense,
then it is dominated by a ``coboundary.''
A more precise version of this theorem is given as Theorem~\ref{thm:oid}.

\subsection*{Acknowledgment}
The author would like to thank Professor M.~Burger and
Professor N.~Monod for useful conversations on this work.
In particular, extending Theorem~\ref{amenable} to
cover measurable quasi-cocycles (Corollary~\ref{cor:ug})
was suggested by Monod.
This research was carried out while the author was staying at
the Hausdorff Research Institute for Mathematics (HIM)
for Trimester Program on ``Rigidity.''
He gratefully acknowledges the kind hospitality and
stimulating environment provided by HIM and the program organizers.
\section{Preliminaries on abstract harmonic analysis}\label{sec:prelim}
In this section, we collect useful facts from abstract harmonic analysis.
We refer the reader to \cite{bhv,brown-ozawa,cowling-haagerup,hewitt-ross,pisier}
for more information.
Let $G$ be a locally compact group and
denote by $\lambda$ the left regular representation of $G$ on $L^2(G)$.
We extend $\lambda$ to the Banach algebra $L^1(G)$ by
\[
(\lambda(f)\zeta)(x)=\int_G f(g)\zeta(g^{-1}x)\,dg=(f*\zeta)(x).
\]
The reduced group C$^*$-algebra $C^*_r(G)$ is defined as
the norm-closure of $\lambda(L^1(G))$ in $\IB(L^2(G))$.
When $G=A$ is abelian, the Fourier transform $L^2(A)\cong L^2(\widehat{A})$
implements a canonical $*$-isomorphism between $C^*_r(A)$ and
the C$^*$-algebra $C_0(\widehat{A})$ of all continuous functions
on the Pontrjagin dual $\widehat{A}$ that vanish at infinity.

Recall that a kernel $\theta\colon G\times G\to \IC$ is said to be
positive definite if $\sum_{i,j}\theta(x_i,x_j)\xi_i\overline{\xi_j}\geq0$
for any $n$ and $x_1,\ldots,x_n \in G$, $\xi_1,\ldots,\xi_n\in\IC$. 
It is well-known that $\theta$ is positive definite if and only if 
there is a map $F$ from $G$ into a Hilbert space $\cK$ such that 
$\theta(x,y)=\ip{F(x),F(y)}$.
A positive definite kernel $\theta$ is said to be \emph{normalized}
if $\theta(x,x)=1$ for all $x\in G$.
One can normalize $\theta(x,y)=\ip{F(x),F(y)}$
by replacing it with $\ip{\frac{F(x)}{\|F(x)\|},\frac{F(y)}{\|F(y)\|}}$.
We note that if $\theta$ is a normalized positive definite kernel, then
$\theta(x,y)\approx1$ implies $\theta(x,z)\approx\theta(y,z)$ uniformly for $z\in G$,
or more precisely
\[
|\theta(x,z)-\theta(y,z)|\le\|F(x)-F(y)\|\,\|F(z)\|\le \sqrt{2}|1-\theta(x,y)|^{1/2}.
\]
If $\theta$ is a bounded measurable positive definite kernel,
then $F$ as above is weakly measurable, i.e.,
$x\mapsto \ip{F(x),v}$ is measurable for every $v\in\cK$, and
\[
\int\theta(x,y)\xi(x)\overline{\xi(y)}\,dx\,dy=\ip{\int\xi(x)F(x)\,dx,\int\xi(y)F(y)\,dy}\geq0
\]
for every $\xi\in L^1(G)$, where $\int\xi(x)F(x)\,dx$ is the unique element in $\cK$ such that
$\ip{\int\xi(x)F(x)\,dx,v}=\int\xi(x)\ip{F(x),v}\,dx$ for every $v\in\cK$.
With this in mind, we say a kernel $\theta\in L^\infty(G\times G)$
is \emph{(essentially) positive definite} if
$\int\theta(x,y)\xi(x)\overline{\xi(y)}\,dx\,dy\geq0$ for every $\xi\in L^1(G)$.
A kernel $\theta\in L^\infty(G\times G)$ is positive definite if and only if
there is a measurable function $P$ from $G$ into a separable
Hilbert space $\cH$ such that $\theta(x,y)=\ip{P(x),P(y)}$ a.e.
Moreover, if $\theta$ is continuous in addition, then $P$ is continuous and
the previous equality holds everywhere.
Indeed, if $\theta$ is positive definite, then there are a 
Hilbert space $\cH$ and a bounded linear map $T\colon L^1(G)\to\cH$
such that $\ip{T\xi,T\eta}=\int\theta(x,y)\xi(x)\overline{\eta(y)}\,dx\,dy$,
because the right hand side defines a semi-inner product on $L^1(G)$.
But, every bounded linear map $T\colon L^1(G)\to\cH$ is represented by
$P\in L^\infty(G,\cH)$ in such a way that $T\xi=\int \xi(x)P(x)\,dx$.
It follows that $\|P\|=\|T\|=\|\theta\|_\infty^{1/2}$ and
$\theta(x,y)=\ip{P(x),P(y)}$ a.e.
When $\theta$ is continuous, $P$ can be taken continuous.
Next, we consider not necessarily positive definite $\theta \in L^\infty(G\times G)$
and define the $\cb$-norm of $\theta$ by
\[
\|\theta\|_{\cb}
=\inf\{ \|P\|\,\|Q\| :
 P,Q\in L^\infty(G,\cH),\ \theta(x,y)=\ip{P(x),Q(y)}\mbox{ a.e.}\},
\]
where the infimum over empty set is equal to $\infty$.
This norm is also described via positive definite kernels.
Let $G^{(2)}=G\sqcup G$ be the disjoint union of two copies of $G$, and
$\iota_{1,2}$ be the inclusion of $G\times G$ into the $(1,2)$-component of
$G^{(2)}\times G^{(2)}$. Then, one has
\[
\|\theta\|_{\cb}=\inf\{ \|\tilde{\theta}\|_\infty :
 \tilde{\theta}\in L^\infty(G^{(2)}\times G^{(2)})
 \mbox{ pos.\ def., }\tilde{\theta}\circ\iota_{1,2}=\theta\}.
\]
Moreover, $\|\theta\|_{\cb}$ coincides with the operator norm
viewed as a Schur multiplier on $\IB(L^2(G))$.
See \cite{haagerup} or Section 3.2 of \cite{spronk} for more information.

Now recall $G$ is a group and define $g\cdot\theta$ for
$g\in G$ and $\theta\in L^\infty(G\times G)$ by
\[
(g\cdot\theta)(x,y)=\theta(g^{-1}x,g^{-1}y),
\]
and suppose that $\theta$ is continuous,
left-invariant, say $\theta(g,h)=\p(g^{-1}h)$ for a continuous function $\p$,
and has finite $\cb$-norm.
Then, the function $\p$ is a Herz--Schur multiplier.
Namely, $m_\theta\colon L^1(G)\ni f\mapsto \p f\in L^1(G)$ extends to the
reduced group C$^*$-algebra $C^*_r(G)$,
which satisfies $\|m_\theta\|\le\|\theta\|_{\cb}$.
Indeed, given an expression $\theta(g,h)=\ip{P(g),Q(h)}$, we define the operator
$V_P\colon L^2(G)\to L^2(G,\cH)$ by
$(V_P\zeta)(x)=\zeta(x)P(x^{-1})$. It is clear that $\|V_P\|\le\|P\|$.
Likewise for $V_Q$. We then observe that
$\lambda(\p f)=V_Q^*(\lambda(f)\otimes1_{\cH})V_P$.
(In fact, $\|\theta\|_{\cb}$ coincides with the cb-norm of $m_\theta$.)
Suppose moreover that $G=A$ is abelian.
We denote by $0_A$ the unit character on $A$ and view it as a character
on $C^*_r(A)\cong C_0(\widehat{A})$.
Since the liner functional $0_A\circ m_\theta$ on $C^*_r(A)\cong C_0(\widehat{A})$
is bounded, it is given by a finite complex Borel measure $\mu_\theta$
on $\widehat{A}$ by the Riesz--Markov representation theorem.
One has $\|\mu_\theta\|\le\|\theta\|_{\cb}$ (actually they coincide),
and $\int_{\widehat{A}} \widehat{a}\,d\mu_\theta=\p(a)$
for all $a\in A$.
Also note that $\mu_\theta$ is positive if and only if $\theta$ is positive definite.

We explain how to take a (generalized) limit in dual Banach spaces.
Fix a positive linear functional $\Lim\colon\ell_\infty(\IN)\to\IC$
which extends the usual limit on the convergent sequences.
Let $V=(V_*)^*$ be a dual Banach space and $(v_n)$ be a
bounded sequence in $V$.
Then, the limit $\Lim_n v_n$ is defined to be
the unique element in $V$ that satisfies
\[
\ip{\Lim_n v_n,\xi}=\Lim_n\ip{v_n,\xi}
\]
for $\xi\in V_*$, where $\ip{\,\cdot\,,\,\cdot\,}$ denotes the
duality coupling between $V$ and its predual $V_*$.
One has $\|\Lim_n v_n\|\le \Lim_n \|v_n\|$.
If $T$ is an operator from $V$ into
another dual Banach space $W=(W_*)^*$ that is weak$^*$-continuous,
i.e., $T^*\xi:=\xi\circ T$ belongs to $V_*$ for every $\xi\in W_*$,
then $T (\Lim_n v_n) = \Lim_n T(v_n)$.
Indeed,
\[
\ip{T\Lim_n v_n,\xi}=\ip{\Lim_n v_n,T^*\xi}
=\Lim_n\ip{v_n,T^*\xi}
=\Lim_n\ip{Tv_n,\xi}=\ip{\Lim_n Tv_n,\xi}
\]
for every $\xi\in W_*$.
Now suppose $Y$ is a $\sigma$-finite measure space and
$(f_n)$ is a real bounded sequence in $L^\infty(Y)=(L^1(Y))^*$.
Then, although $\Lim_n f_n$ may not coincide with the pointwise
limit $\Lim_n f_n(y)$, one still has
\[
\liminf_n f_n(y)\le (\Lim_n f_n)(y) \le \limsup_n f_n(y)
\]
for a.e.\ $y\in Y$. Indeed, for any $B\subset Y$ with finite measure,
one has
\[
\int_B \liminf_n f_n\,dy \le \liminf_n \int_B f_n\,dy
\le \Lim_n \int_B f_n\,dy = \int_B \Lim_n f_n\,dy.
\]

We observe that if $\theta_n\in L^\infty(G\times G)$ have uniformly bounded cb-norm,
then $\Lim_n\theta_n\in L^\infty(G\times G)$ has cb-norm at most $\Lim_n\|\theta_n\|_{\cb}$.
Indeed, this follows from the fact that $\Lim_n\theta_n$ is
positive definite if $\theta_n$ are.
(In fact, the Banach space $V_2(G)$ of those kernels
that have finite $\cb$-norm is a dual Banach space and
the natural map from $V_2(G)$ into $L^\infty(G\times G)$
is a weak$^*$-continuous injection.)
We finally note that for a continuous kernel
$\theta$ on $G$ and a closed subgroup $A\le G$,
the restriction of $\theta$ to the subgroup $A$ satisfies
$\| \theta|_{A\times A} \|_{\cb} \le \|\theta\|_{\cb}$.
Indeed, let $\theta(x,y)=\ip{P(x),Q(y)}$ for a.e.\ $x,y\in G$.
Take an approximate unit $(f_n)$ of $L^1(G)$ and let
$P_n(x)=\int_G f_n(g)P(g^{-1}x)\,dg$, and likewise for $Q_n$.
Then, $P_n$ and $Q_n$ are continuous and the continuous kernel
$\theta_n(x,y)=\ip{P_n(x),Q_n(y)}$ has cb-norm at most $\|P\|\,\|Q\|$.
The same thing holds for $\theta_n|_{A\times A}$.
Taking the limit, we are done.
\section{Relationship to other formulations}\label{sec:tpq}
Property (T) has several equivalent characterizations
(see Section 2.12 in \cite{bhv} or Theorem 12.1.7 in \cite{brown-ozawa}).
In this section, we pursue analogous characterizations.

\begin{defn}
Let $A\le G$ be a subgroup of a locally compact group $G$.
We say the pair $(G,A)$ has \emph{relative property \TTT}
if every wq-cocycle on $G$ is bounded on $A$.
We say the pair $(G,A)$ has \emph{relative property {\Tp}} or {\Tq} respectively,
if for every $\e>0$, there exist a compact subset $K\subset G$ and $\delta>0$
(we will take $\delta<\e$ for granted) satisfying the following condition.
\begin{itemize}
\item[\Tp:]
If $\theta\colon G\times G\to \IC$ is a (normalized) Borel positive definite kernel
such that
\[
\sup_{g\in G}\| g\cdot\theta - \theta \|_{\cb} < \delta
\ \mbox{ and }\
\sup_{g^{-1}h\in K} | \theta(g,h) - 1 | < \delta,
\]
then one has
\[
\sup_{x,y\in A} | \theta(x,y) - 1 | < \e.
\]
\item[\Tq:]
If $\pi\colon G\to\cU(\cH)$ is a Borel map and $\xi\in\cH$ is a unit vector
such that
\[
\sup_{g,h\in G} \|\pi(gh)\xi-\pi(g)\pi(h)\xi\|<\delta
\ \mbox{ and }\
\sup_{g\in K}\|\pi(g)\xi-\xi\|<\delta,
\]
then one has
\[
\sup_{x\in A}\|\pi(x)\xi-\xi\|<\e.
\]
\end{itemize}
When $(G,G)$ has relative property {\Tp} or {\Tq},
we simply say $G$ has \emph{property {\Tp}} or {\Tq} respectively.
\end{defn}

We remark that the term ``Borel'' in the definition of property {\Tp}
can be replaced with ``continuous.''
Indeed, one can replace any Borel positive definite kernel $\theta$
with a continuous $\tilde{\theta}$, defined by
\[
\tilde{\theta}(x,y)=\frac{1}{|K|^2}\int_{K\times K}\theta(xk,yk')\,dk\,dk',
\]
where $K$ is a compact non-negligible subset.
Note that $\tilde{\theta}$ is uniformly close to the original $\theta$
if $\theta(x,xk)\approx1$ for all $(x,k)\in G\times K$.

Property {\Tq} is suited for the study of rigidity phenomena
in the setting of $\e$-representations (see \cite{kazhdan,bot}).
We will prove that $\Tp\Rightarrow\TTT\Rightarrow\Tq$, but
it is unclear whether they are all equivalent.
Relative property {\Tq} implies relative property (T).
This fact is not hard to show when $A$ is normal.
For the general case, see \cite{jolissaint}.

\begin{thm}\label{thm:tpq}
For a pair $(G,A)$ as above, one has
\[
\mbox{\normalfont rel.\ property \Tp} \Rightarrow
\mbox{\normalfont rel.\ property \TTT} \Rightarrow
\mbox{\normalfont rel.\ property \Tq}.
\]
\end{thm}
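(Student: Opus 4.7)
The plan is to prove the two implications separately via intermediate Hilbert-space constructions: for relative \Tp\ $\Rightarrow$ relative \TTT, a normalized positive definite kernel built from the wq-cocycle; for relative \TTT\ $\Rightarrow$ relative \Tq, an amplified direct sum of $\e$-representations.

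For the first implication, given a wq-cocycle $(\pi,\cc)$ of defect $D$ on $G$ and a parameter $t>0$, I introduce the normalized Borel positive definite kernel
\[
\theta_t(g,h) := \exp\bigl(-t\,\|\cc(g)-\cc(h)\|^2\bigr).
\]
Positive definiteness holds by Schoenberg; concretely $\theta_t(g,h)=\ip{F_t(g),F_t(h)}$ where $F_t(g)(\xi) := e^{i\sqrt{2t}\ip{\cc(g),\xi}}$ lies in $L^2(\cH_{\IR},\gamma)$ for the standard Gaussian $\gamma$, with $\|F_t(g)\|=1$. The bound $\|\cc(g)-\cc(h)\|\le\|\cc(g^{-1}h)\|+D$ combined with the local boundedness of $\cc$ yields $\sup_{g^{-1}h\in K}|\theta_t(g,h)-1|=O(t)$ for any compact $K$. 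The crux is the uniform estimate
\[
\sup_{k\in G}\|k\cdot\theta_t-\theta_t\|_{\cb}\le 2\sqrt{2t}\,D.
\]
To derive it, expand $\cc(k^{-1}g)=\cc(k^{-1})+\pi(k^{-1})\cc(g)+r(k^{-1},g)$ with $\|r(k^{-1},g)\|\le D$. On $L^2(\cH_{\IR},\gamma)$ the $g$-independent summand $\cc(k^{-1})$ is absorbed by the unitary multiplication operator $M_{\sqrt{2t}\cc(k^{-1})}$ (factored out of both slots of the inner product), the part linear in $\cc(g)$ is absorbed by the Koopman unitary $U_k$ induced by $\pi(k^{-1})$, and only the residual shift $\sqrt{2t}\,r(k^{-1},g)$ of norm $\le\sqrt{2t}D$ survives. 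Writing
\[
(k\cdot\theta_t-\theta_t)(g,h) = \ip{(M_{\sqrt{2t}r(k^{-1},g)}-I)U_k F_t(g),\,M_{\sqrt{2t}r(k^{-1},h)}U_k F_t(h)} + \ip{U_k F_t(g),\,(M_{\sqrt{2t}r(k^{-1},h)}-I)U_k F_t(h)}
\]
then yields the estimate, using $\|(M_v-I)F\|\le\|v\|$ whenever $|F|\equiv 1$. Applying relative \Tp\ with $\e'=1/2$ to obtain $(K,\delta)$ and picking $t$ so small that both $2\sqrt{2t}D<\delta$ and $t(\sup_K\|\cc\|+D)^2<\delta$ hold, the conclusion $|\theta_t(x,y)-1|<1/2$ on $A\times A$ forces $\|\cc(x)-\cc(y)\|^2\le(\log 2)/t$, and hence $\cc|_A$ is bounded.

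For the second implication I argue by contrapositive. If relative \Tq\ fails at some $\e>0$, then along an exhaustion $K_n\nearrow G$ (available by second countability) and with $\delta_n:=1/n^2$ I extract Borel maps $\pi_n\colon G\to\cU(\cH_n)$, unit vectors $\xi_n$, and $x_n\in A$ satisfying $\sup_{g,h}\|\pi_n(gh)\xi_n-\pi_n(g)\pi_n(h)\xi_n\|<\delta_n$, $\sup_{g\in K_n}\|\pi_n(g)\xi_n-\xi_n\|<\delta_n$, yet $\|\pi_n(x_n)\xi_n-\xi_n\|\ge\e$. On $\cH:=\bigoplus_n\cH_n$ with $\pi:=\bigoplus_n\pi_n$, I set
\[
\cc(g):=\bigoplus_n n\bigl(\pi_n(g)\xi_n-\xi_n\bigr).
\]
The defect vector $\bigoplus_n n(\pi_n(gh)\xi_n-\pi_n(g)\pi_n(h)\xi_n)$ has squared norm at most $\sum_n n^2\delta_n^2=\sum_n n^{-2}<\infty$. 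For $g\in K_m$, the tail $n\ge m$ contributes at most $\sum_{n\ge m}n^{-2}$ and the head $n<m$ is a finite sum, so $\cc$ is locally bounded. However $\|\cc(x_n)\|\ge n\e\to\infty$ with $x_n\in A$, exhibiting an unbounded wq-cocycle on $A$ that contradicts relative \TTT.

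The principal obstacle lies in the uniform cb-norm estimate of the first implication: a naive pointwise bound on $|(k\cdot\theta_t-\theta_t)(g,h)|$ scales like $t\|\cc(g)\|\cdot\|r(k^{-1},g)\|$ and so is not uniform in $k$. The fix is to realize $\theta_t$ as a kernel of Gaussian coherent states and use the two unitarities $M_{\,\cdot}$ and $U_k$ to simultaneously absorb the constant and linear parts of the affine decomposition of $\cc(k^{-1}g)$, leaving only the bounded wq-cocycle error to enter the estimate.
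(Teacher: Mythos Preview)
Your proof is correct and follows essentially the same route as the paper. For $\Tp\Rightarrow\TTT$ you use the same Gaussian/Schoenberg kernel $\theta_t(g,h)=\exp(-t\|\cc(g)-\cc(h)\|^2)$ and the same key observation that the affine part $\cc(k^{-1})+\pi(k^{-1})\cc(g)$ can be absorbed so only the defect enters the cb-estimate; the paper phrases this via the Fock-space map $E$ (with $\ip{E(\xi),E(\eta)}=\exp(-\|\xi-\eta\|^2)$) and scales $\cc$ instead of introducing $t$, while you use the equivalent Gaussian $L^2$-model with multiplication and Koopman unitaries---these are the same construction in different coordinates. For $\TTT\Rightarrow\Tq$ the paper simply refers to the classical Delorme--Guichardet argument, which is exactly the direct-sum construction you carry out.
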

\begin{proof}
We only prove $\Tp\Rightarrow\TTT$, and omit the proof of $\TTT\Rightarrow\Tq$
because it is virtually same as the classical one
(see Proposition 2.4.5 in \cite{bhv}).
Note that we are assuming $G$ is second countable.

Let $\e=1/2$ and take $(K,\delta)$ which satisfies condition {\Tp}.
We may assume that $K$ is symmetric and contains the unit.
Let $\cc\colon G\to\cH$ be a wq-cocycle.
Considering realification, we may assume that $\cc$ is real
and $\pi$ is orthogonal.
By scaling $\cc$, we may further assume that
\[
\sup_{g,h\in G}\|\cc(gh)-\big(\cc(g)+\pi(g)\cc(h)\big)\|\le\delta_0
\ \mbox{ and }\
\sup_{g\in K}\|\cc(g)\|\le\delta_0,
\]
where $\delta_0>0$ is a sufficiently small number which will be chosen later.
We consider the full Fock Hilbert space $\cF=\bigoplus_{n=0}^\infty\cH^{\otimes n}$,
where $\cH^{\otimes 0}=\IR$, and the exponential map $\EXP\colon\cH\to\cF$ given by
\[
\EXP(\xi)=
1\oplus\frac{\xi}{\sqrt{1!}}\oplus \frac{\xi\otimes\xi}{\sqrt{2!}}\oplus
\frac{\xi\otimes\xi\otimes\xi}{\sqrt{3!}}\oplus\cdots.
\]
We define $E\colon \cH\to\cF$ by $E(\xi)=\exp(-\|\xi\|^2)\EXP(\sqrt{2}\xi)$.
It follows
\[
\ip{E(\xi),E(\eta)}=\exp(-\|\xi-\eta\|^2)
\]
for all $\xi,\eta\in\cH$.
In particular, $E$ is a continuous map into the unit sphere of $\cF$.
Consider the normalized Borel positive definite kernel
\[
\theta(x,y)=\ip{E(\cc(x)),E(\cc(y))}=\exp(-\|\cc(x)-\cc(y)\|^2).
\]
Since
\[
\theta(x,y)=\ip{E\bigl(\cc(g^{-1})+\pi(g^{-1})\cc(x)\bigr),E\bigl(\cc(g^{-1})+\pi(g^{-1})\cc(y)\bigr)},
\]
one has
\[
\|g\cdot\theta-\theta\|_{\cb}\le 2\sup_{x\in G}\|E(\cc(g^{-1}x))-E\bigl(\cc(g^{-1})+\pi(g^{-1})\cc(x)\bigr)\|
<\delta_1
\]
for all $g\in G$, where $\delta_1=2(2-2\exp(-\delta_0^2))^{\oh}$.
Also,
\[
|\theta(g,h)-1|=1-\exp(-\|\cc(g)-\cc(h)\|^2) < 1-\exp(-4\delta_0^2)
\]
for all $(g,h)\in G^2$ that satisfy $g^{-1}h\in K$.
Thus, if $\delta_0>0$ was chosen sufficiently small,
then property {\Tp} implies
\[
1-\exp(-\|\cc(x)-\cc(1)\|^2)=| \theta(x,1) - 1 | < \e = 1/2.
\]
for all $x\in A$.
This means that $\cc$ is bounded on $A$.
\end{proof}

\begin{cor}\label{cor:ug}
Let $G\curvearrowright X$ be a measure-preserving action on a standard probability space $X$
and $\beta\colon X\times G\to G'$ be a measurable map having the following property:
For every compact subset $K\subset G$ and a.e.\ $x\in X$,
the set
\[
\{\beta(x,g) : g\in K\}\cup\{ \beta(x,gh)^{-1}\beta(x,g)\beta(g^{-1}x,h) : g,h\in G\}
\]
is relatively compact in $G'$.
If $G$ has property {\Tq} and $G'$ is a-T-menable, then
there exists a sequence $X_1\subset X_2\subset\cdots\subset X$ such that
$\bigcup X_n$ is co-null in $X$ and
\[
\{ \beta(x,g) : x\in X_n,\, g\in G\mbox{ such that }g^{-1}x\in X_n\}
\]
are relatively compact in $G'$ for all $n$.
\end{cor}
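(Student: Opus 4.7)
The plan is to deduce the sets $X_n$ from Theorem~\ref{length} (or rather its groupoid refinement Theorem~\ref{thm:oid}) applied to the length function
\[
\el(x,g) := \|c'(\beta(x,g))\|,
\]
where $c'\colon G'\to\cH'$ is a proper $1$-cocycle of $G'$ with associated orthogonal representation $\pi'$, produced by a-T-menability. Once Theorem~\ref{length} yields $h\in L^1(X)$ with $\el(x,g)\le h(x)+h(g^{-1}x)$ a.e., the sets $X_n:=\{h\le n\}$ work: whenever both $x$ and $g^{-1}x$ lie in $X_n$ one has $\el(x,g)\le 2n$, and properness of $c'$ forces $\beta(x,g)$ into a fixed compact subset of $G'$.

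First I verify the subadditivity. Writing $d(x,g,h):=\beta(x,gh)\beta(g^{-1}x,h)^{-1}\beta(x,g)^{-1}$, the cocycle identity for $c'$ combined with the defect hypothesis on $\beta$ gives
\[
\el(x,gh)\le\el(x,g)+\el(g^{-1}x,h)+\|c'(d(x,g,h))\|,
\]
and $\e_0(x):=\sup_{g,h}\|c'(d(x,g,h))\|$ is a.e.\ finite. Exhausting $X$ by $Y_n:=\{\e_0\le n\}$ makes the defect uniformly bounded on each piece; the more precise groupoid version Theorem~\ref{thm:oid} is designed to absorb such a bounded additive defect into $h$.

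The main hypothesis to verify is $\esup_{g\in G}\int_X\el(x,g)\,dx<\infty$, and this is where property \Tq\ enters via an exponential construction modelled on the proof of Theorem~\ref{thm:tpq}. Fix $\e=\oh$ and let $(K,\delta)$ be the data from \Tq. Choose $R$ large so that $X_R:=\{x:f_K(x)\le R\text{ and }\e_0(x)\le R\}$ has measure $>1-\delta^2/100$, where $f_K(x):=\sup_{g\in K}\|c'(\beta(x,g))\|$. For a small $\alpha>0$, let $\rho\colon G'\to\cU(\cF)$ be the GNS representation of the continuous positive definite function $\phi(g'):=\exp(-\alpha^2\|c'(g')\|^2)$ with cyclic unit vector $\Omega$. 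Form the Borel map $\pi\colon G\to\cU(L^2(X,\cF))$ by $(\pi(g)\eta)(x)=\rho(\beta(x,g))\eta(g^{-1}x)$ and the unit vector $\xi:=1_{X_R}\otimes\Omega/\sqrt{\mu(X_R)}$. With $\alpha R<\delta/4$ and $\mu(X_R^c)<\delta^2/100$, $\xi$ is $\delta$-almost invariant under $K$; assuming also $\delta$-multiplicativity of $\pi$ on $\xi$, property \Tq\ forces $\|\pi(g)\xi-\xi\|<\oh$ for all $g\in G$. Unpacking via $1-e^{-t}\ge\tfrac12\min(t,1)$ then produces a uniform-in-$g$ bound on $\int_X\min(\alpha^2\el(x,g)^2,1)\,dx$, from which the $L^1$ hypothesis of Theorem~\ref{length} follows after undoing the scaling.

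The hard part is verifying $\sup_{g_1,g_2}\|\pi(g_1g_2)\xi-\pi(g_1)\pi(g_2)\xi\|<\delta$. A direct computation reduces the integrand to $2(1-\exp(-\alpha^2\|c'(b^{-1}d^{-1}b)\|^2))$ with $b=\beta(x,g_1)\beta(g_1^{-1}x,g_2)$ and $d=d(x,g_1,g_2)$, and $c'(b^{-1}d^{-1}b)$ expands to $\pi'(b^{-1})[c'(d^{-1})+(\pi'(d^{-1})-I)c'(b)]$. The $c'(d^{-1})$ piece is controlled by the $\e_0\le R$ bound on $X_R$, but the summand $(\pi'(d^{-1})-I)c'(b)$ involves the a priori unbounded quantity $c'(b)$. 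Overcoming this calls for delicate bookkeeping between the indicator restriction in $\xi$, the measure-preserving character of the $G$-action, and the scaling parameter $\alpha$, so as to confine the unbounded contribution to a subset of measure at most of order $\mu(X_R^c)$---small enough to fit within the $\delta$-budget.
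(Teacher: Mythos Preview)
Your approach has a genuine gap precisely at the place you flag as ``the hard part.'' Writing $a=\beta(x,g_1g_2)$ and $b=\beta(x,g_1)\beta(g_1^{-1}x,g_2)$, the multiplicativity defect of $\pi$ on your vector $\xi$ is governed by $\|(\rho(a)-\rho(b))\Omega\|=\|(\rho(b^{-1}d^{-1}b)-1)\Omega\|$, and as you compute, $\|c'(b^{-1}d^{-1}b)\|$ contains the summand $\|(\pi'(d^{-1})-I)c'(b)\|$. Your proposed remedy---confining the bad contribution to a set of measure $\approx\mu(X_R^c)$---cannot work: the unboundedness lives in the $(g_1,g_2)$-variables, not in $x$. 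For \emph{every} $x$ (even $x\in X_R$, where you only control $\|c'(d)\|\le R$), the vector $c'(b)$ is unbounded as $g_1,g_2$ range over $G$, and there is no reason for $\pi'(d^{-1})-I$ to be small in operator norm merely because $d$ sits in a fixed compact set. No choice of $\alpha$ or indicator cut-off rescues this, since the integrand $2(1-\exp(-\alpha^2\|c'(b^{-1}d^{-1}b)\|^2))$ can be close to $2$ on all of $X$ for suitable $g_1,g_2$.

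The paper sidesteps this entirely by exploiting a different avatar of a-T-menability: instead of the proper cocycle $c'$ and its associated Gaussian positive definite function, it takes a $C_0$ unitary representation $\pi'$ of $G'$ with a sequence of almost invariant unit vectors $\xi_n$ (this is the characterisation of the Haagerup property in \cite{ccjjv}, Theorem~2.1.1). With $\xi_n$ viewed as a constant section in $L^2(X,\cH)$, the multiplicativity defect becomes
\[
\|\pi(g_1g_2)\xi_n-\pi(g_1)\pi(g_2)\xi_n\|^2
=\int_X\bigl\|\xi_n-\pi'\bigl(\beta(x,g_1g_2)^{-1}\beta(x,g_1)\beta(g_1^{-1}x,g_2)\bigr)\xi_n\bigr\|^2\,dx,
\]
and the argument of $\pi'$ is exactly the defect element, which for a.e.\ $x$ ranges over a fixed compact set as $g_1,g_2$ vary. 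Approximate invariance of $\xi_n$ then forces the integrand to $0$ pointwise (dominated by $4$), with no conjugation by $b$ appearing at all. After applying \Tq, the paper's length function $\el(x,g)=\|\pi'(\beta(x,g))\xi-\xi\|+D(x)$ is \emph{bounded}, so the hypothesis of Theorem~\ref{thm:oid} is automatic; the $C_0$ property replaces properness of $c'$ in the final step. The moral: your exponential/GNS construction makes the cyclic vector too rigid---it is far from invariant---whereas the $C_0$ formulation supplies vectors whose approximate invariance is exactly what neutralises the defect term.
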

The proof of Corollary~\ref{cor:ug} will be given at the end of Section~\ref{sec:length}.
\section{Property {\TTT} for $SL_n(\IK)$}\label{sec:slnk}
In this section, we prove property {\Tp} for $SL_n(\IK)$.
We follow a well-established line of the proof for property (T)
(see Section 1.4 in \cite{bhv}),
also employing ideas from \cite{burger,shalom:tams,shalom:bdd}.

Let $G$ be a locally compact group and $A\le G$ be an abelian closed
normal subgroup.
Then, $G$ acts on the Pontrjagin dual $\widehat{A}$ by the dual action
of the conjugate action.
This action induces an isometric action of $G$ on the Banach space
$\cM(\widehat{A})$ of finite regular Borel measures on $\widehat{A}$.

\begin{prop}\label{prop:nac}
Let $G=G_0\ltimes A$ be the semidirect product of
a locally compact abelian group $A$ by
a continuous action of a locally compact group $G_0$.
Then, the following are equivalent.
\begin{enumerate}
\item\label{item:T} The pair $(G,A)$ has relative property $\mathrm{(T)}$.
\item\label{item:TTT} The pair $(G,A)$ has relative property {\Tp} (resp.\ {\TTT}, {\Tq}).
\item\label{item:m} For every $\e>0$, there exist a compact subset
$K\subset G_0\cup A$ and $\delta>0$ which have the following property:
If $\mu$ is a probability measure on $\widehat{A}$ such that
$\|g\cdot\mu-\mu\|\le\delta$ for $g\in K$ and
$|1-\int_{\widehat{A}} \widehat{a}\,d\mu|<\delta$ for $a\in K\cap A$,
then $|1-\int_{\widehat{A}} \widehat{x}\,d\mu|<\e$ for all $x\in A$.
\end{enumerate}
\end{prop}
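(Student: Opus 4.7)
The strategy is to close a cycle around $(1)$, $(3)$, and the strongest form $\Tp$ of $(2)$: since Theorem~\ref{thm:tpq} gives $\Tp\Rightarrow\TTT\Rightarrow\Tq$ and the remark preceding it gives $\Tq\Rightarrow\mathrm{(T)}$ when $A$ is normal, the implication $(2)\Rightarrow(1)$ is automatic in all three flavors. It therefore suffices to prove $(1)\Rightarrow(3)$ and $(3)\Rightarrow\Tp$.

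The equivalence $(1)\Leftrightarrow(3)$ is the classical Fourier-analytic translation of relative (T) in the presence of a closed abelian normal subgroup. For a unitary representation $\pi\colon G\to\cU(\cH)$, the SNAG theorem applied to $\pi|_A$ produces a projection-valued spectral measure $P$ on $\widehat{A}$; a unit vector $\xi$ yields a probability measure $\mu_\xi(E):=\|P(E)\xi\|^2$ with $\ip{\pi(a)\xi,\xi}=\int\widehat{a}\,d\mu_\xi$ and, via $\pi(g)P(E)\pi(g)^{-1}=P(g\cdot E)$, also $\mu_{\pi(g)\xi}=g\cdot\mu_\xi$. Almost-$G$-invariance of $\xi$ thus corresponds to the antecedent of $(3)$, and $A$-invariance of $\xi$ to an atom of $\mu_\xi$ at $0_A\in\widehat{A}$; conversely, measures witnessing failure of $(3)$ assemble into representations $\bigoplus L^2(\widehat{A},\mu_n)$ (with $A$ acting by multiplication and $G_0$ by translation plus Radon--Nikodym normalization) violating relative (T).

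For the main implication $(3)\Rightarrow\Tp$, let $\theta$ be a normalized positive definite kernel on $G$ satisfying the hypotheses of $\Tp$ with some $\delta_0>0$ to be chosen small. Since $A$ is amenable, fix an invariant mean $\Lim_a$ on $L^\infty(A)$ and set
\[
\tilde\theta(x,y):=\Lim_{a\in A}\theta(ax,ay).
\]
Positive definite kernels form a weak-$*$ closed cone in $V_2(G)$, so $\tilde\theta$ is again a normalized positive definite kernel, $A$-left-invariant by construction, with $\|\tilde\theta-\theta\|_{\cb}<\delta_0$; the factorization $ga^{-1}=a^{-1}(aga^{-1})$ combined with the uniform bound on $\|h\cdot\theta-\theta\|_{\cb}$ then gives $\|g\cdot\tilde\theta-\tilde\theta\|_{\cb}<\delta_0$ for every $g\in G$. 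Restricting to $A$ yields a continuous positive definite function $\tilde\p(z):=\tilde\theta(e,z)$ with $\tilde\p(e)=1$, and Bochner's theorem provides a probability measure $\mu$ on $\widehat{A}$ with $\int\widehat{a}\,d\mu=\tilde\p(a)$.

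Verifying $(3)$'s antecedent for $\mu$ splits into an easy Fourier half (for $a\in K\cap A$, $|1-\int\widehat{a}\,d\mu|\le|1-\theta(e,a)|+\|\tilde\theta-\theta\|_\infty<2\delta_0$) and a variation half that is the main technical obstacle. By the Herz--Schur correspondence of Section~\ref{sec:prelim}, $\|g\cdot\mu-\mu\|$ equals the $\cb$-norm of $(x,y)\mapsto\tilde\theta(g^{-1}xg,g^{-1}yg)-\tilde\theta(x,y)$ on $A\times A$ (after using $A$-left-invariance of $\tilde\theta$ to rewrite $\tilde\p(g^{-1}zg)$). Writing $\tilde\theta=\ip{\tilde P(\cdot),\tilde P(\cdot)}$ and exploiting $A$-left-invariance to produce a unitary representation $U_a\tilde P(x):=\tilde P(ax)$ on $\cH_0:=\overline{\operatorname{span}}\{\tilde P(x):x\in G\}$, one has $\tilde\theta(xg,yg)=\ip{U_{y^{-1}x}\tilde P(g),\tilde P(g)}$ and $\tilde\theta(x,y)=\ip{U_{y^{-1}x}\tilde P(e),\tilde P(e)}$. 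Writing $\tilde\theta(g^{-1}xg,g^{-1}yg)=(g\cdot\tilde\theta)(xg,yg)$ and adding and subtracting $\tilde\theta(xg,yg)$ decomposes the kernel into a piece controlled by $\|g\cdot\tilde\theta-\tilde\theta\|_{\cb}<\delta_0$ and a ``right-shift'' piece $\tilde\theta(xg,yg)-\tilde\theta(x,y)$; expanding the latter in $v:=\tilde P(g)-\tilde P(e)$ and using that each summand of the form $(x,y)\mapsto\ip{U_x a,U_y b}$ has $\cb$-norm $\le\|a\|\,\|b\|$ bounds it by $3\|v\|$. With $K$ taken symmetric, $\|v\|^2=2(1-\operatorname{Re}\tilde\theta(g,e))\le 4\delta_0$ for $g\in K$, so $\|g\cdot\mu-\mu\|\le\delta_0+6\sqrt{\delta_0}$. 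Choosing $\delta_0$ small enough, $(3)$ yields $|\tilde\p(z)-1|<\e/2$ for all $z\in A$; $\cb$-closeness of $\tilde\theta$ to $\theta$ then transfers this to $|\theta(x,y)-1|<\e$ for all $x,y\in A$, as required.
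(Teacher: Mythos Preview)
Your argument is correct and follows the same overall architecture as the paper---cite the known equivalence $(1)\Leftrightarrow(3)$, then prove $(3)\Rightarrow\Tp$ by averaging $\theta$ over $A$ to produce an $A$-invariant kernel whose associated measure on $\widehat{A}$ satisfies the hypotheses of $(3)$---but the technical implementation differs in two places.

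First, for the averaging step the paper does not use an invariant mean directly: it first smooths $\theta$ by a double integral over $K\times K$ to obtain an equicontinuous family $\{\theta_a^g\}$ of kernels on $A$, then averages over a F{\o}lner sequence and takes an ultralimit. This packaging guarantees that the limit $\tilde\theta^g$ is genuinely continuous, which is what is needed to invoke the Herz--Schur/Bochner correspondence from Section~\ref{sec:prelim}. Your pointwise invariant-mean definition $\tilde\theta(x,y)=\Lim_a\theta(ax,ay)$ is cleaner, but you are silently using that the family $\{a\cdot\theta\}_{a\in A}$ is right-equicontinuous (this follows from $\|P(x)-P(xk)\|^2<2\delta_0$ for $k\in K$) to conclude that $\tilde\theta$ is continuous and that evaluations like $\tilde\theta(e,z)$ make sense; it would be worth saying so.

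Second, for the key estimate $\|g\cdot\mu-\mu\|\lesssim\sqrt{\delta_0}$ the paper takes a different route: it builds $g$-dependent averaged kernels $\tilde\theta^g$ (with the conjugation $x\mapsto gxg^{-1}$ baked into the definition) and simply bounds $\|g\cdot\mu_{\tilde\theta^1}-\mu_{\tilde\theta^1}\|\le\|\tilde\theta^g-\tilde\theta^1\|_{\cb}\le 2\delta_1$ by the triangle inequality through $\theta$. You instead keep a single $\tilde\theta$ on all of $G$, pass to the GNS representation $U$ of $A$ on $\overline{\mathrm{span}}\{\tilde P(x)\}$, and decompose the conjugation kernel into a $(g\cdot\tilde\theta-\tilde\theta)$ piece plus a ``right-shift'' piece controlled by $\|\tilde P(g)-\tilde P(e)\|$. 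Both arguments yield the same $\delta_0+O(\sqrt{\delta_0})$ bound; yours is more structural (and makes transparent why only $g\in K$ matters, via $\|\tilde P(g)-\tilde P(e)\|$), while the paper's avoids the GNS construction at the cost of carrying the extra index $g$ through the averaging.
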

\begin{proof}
The implication $(\ref{item:TTT})\Rightarrow(\ref{item:T})$ is explained in Section~\ref{sec:tpq},
and $(\ref{item:T})\Rightarrow(\ref{item:m})$ is proved in \cite{ct,ioana}.
Now, we assume $(\ref{item:m})$ and prove that the pair $(G,A)$ has relative property {\Tp}.
Let $\e>0$ be given and take $(K,\delta_0)$ which satisfies condition~$(\ref{item:m})$.
We assume that $K$ is symmetric and contains a neighborhood of the unit.
Let $\theta\colon G\times G\to \IC$ be a normalized continuous positive definite kernel
such that
\[
\sup_{x\in G}\| x\cdot\theta - \theta \|_{\cb} < \delta
\ \mbox{ and }\
\sup_{g^{-1}h\in K} | \theta(g,h) - 1 | < \delta,
\]
where $\delta>0$ is a sufficiently small number which will be chosen later.
We will prove that $\sup_{a,b\in A}|\theta(a,b)-1|<2\e$ for every $a,b\in A$.
Express $\theta$ as $\theta(x,y)=\ip{P(x),P(y)}$.
Then, one has
\[
\| P(x) - P(xg) \|^2
=2-2\Re\theta(x,xg) <2\delta
\]
for all $x\in G$ and $g\in K$.
In particular,
\[
\sup_{g,h\in K}\|\theta(\,\cdot\,g,\,\cdot\,h)-\theta(\,\cdot\,,\,\cdot\,)\|_{\cb}
\le 3\delta^{\oh}.
\]

We will average $\theta$ over $A$ to obtain an $A$-invariant kernel.
A similar idea is used in \cite{mimura}.
For every $a\in A$ and $g\in K$, define a kernel $\theta_a^g$ on $A$ by
\[
(\theta_a^g)(x,y)=\frac{1}{|K|^2}\int_{K\times K}\theta(agxg^{-1}k,agyg^{-1}k')\,dk\,dk'.
\]
Then, the family $\{\theta_a^g : a\in A,\, g\in K\}$ is (right) equicontinuous
and satisfies $\| \theta_a^g - \theta\|_{\cb}\le \delta+6\delta^{\oh}=:\delta_1$
(see the last paragraph of Section~\ref{sec:prelim}).
Take a F{\o}lner sequence $L_n\subset A$ and
consider the kernel
\[
\theta_n^g(x,y)
=\frac{1}{|L_n|} \int_{L_n} \theta_a^g(x,y)\,da,
\]
where the integration is with respect to the Haar measure of $A$.
We fix a free ultrafilter on $\IN$ and denote the associated ultralimit by $\Lim_n$.
Then, $\tilde{\theta}^g(x,y)=\Lim_n \theta_n^g(x,y)$ is an $A$-invariant
continuous positive definite kernel such that
$\|\tilde{\theta}^g-\theta\|_{\cb}\le\delta_1$
for $g\in K$.

Now, let $\mu_{\tilde{\theta}^1}$ be the measure associated with
the $A$-invariant continuous positive definite kernel $\tilde{\theta}^1$.
One has $1\geq\mu_{\tilde{\theta}^1}(\widehat{A})=\tilde{\theta}^1(1,1)\geq1-\delta_1$,
\[
\sup_{g\in K}\|g\cdot\mu_{\tilde{\theta}^1}-\mu_{\tilde{\theta}^1}\|
\le\sup_{g\in K}\|\tilde{\theta}^g-\tilde{\theta}^1\|_{\cb}\le2\delta_1
\]
and
\[
\sup_{a\in K\cap A}|1-\int_{\widehat{A}}\widehat{a}\,d\mu_{\tilde{\theta}^1}|
=\sup_{a\in K\cap A}|1-\tilde{\theta}^1(a,1)|\le 2\delta_1.
\]
Thus, if $\delta>0$ was chosen sufficiently small,
then condition~(\ref{item:m}) implies
\[
\sup_{x,y\in A}|\tilde{\theta}^1(x,y)-1|
=\sup_{x,y\in A}|1-\int_{\widehat{A}}\widehat{x^{-1}y}\,d\mu_{\tilde{\theta}^1}|\le\e.
\]
It follows that
\[
\sup_{x,y\in A}|\theta(x,y)-1|\le\e+\delta_1.
\]
This completes the proof.
\end{proof}

We remark that one can prove in a similar manner
the following strengthening of Theorem 5.5 in \cite{shalom:tams}.
\begin{prop}
Let $G$ be a locally compact group and $A$ be an abelian closed
normal subgroup. Assume that there is no $G$-invariant finitely additive
probability measure defined on the Borel subsets of $\widehat{A}-\{0_A\}$.
Then, the pair $(G,A)$ has relative property \Tp.
\end{prop}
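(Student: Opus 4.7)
The plan is to adapt the proof of Proposition~\ref{prop:nac} to the setting where $(G,A)$ need not be a semidirect product, replacing the use of condition~$(\ref{item:m})$ by the non-existence of a $G$-invariant mean on $\widehat{A}-\{0_A\}$ in the final step. Given $\e>0$ and a normalized continuous positive definite kernel $\theta$ on $G\times G$ satisfying $\sup_{g}\|g\cdot\theta-\theta\|_{\cb}<\delta$ and $\sup_{g^{-1}h\in K}|\theta(g,h)-1|<\delta$, first I would average $\theta$ over the normal abelian subgroup $A$: choose a F{\o}lner sequence $(L_m)\subset A$ and form
\[
\tilde{\theta}(x,y)=\Lim_m\frac{1}{|L_m|}\int_{L_m}\theta(ax,ay)\,da,
\]
a continuous positive definite kernel that is left-$A$-invariant, with $\|\tilde{\theta}-\theta\|_{\cb}=O(\delta^{\oh})$, just as in the proof of Proposition~\ref{prop:nac}.

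Next I would verify that the approximate $G$-invariance of $\theta$ is inherited by $\tilde{\theta}$. A change of variables $a\mapsto gag^{-1}$ in the $L_m$-integral, together with the F{\o}lner property applied to $gL_mg^{-1}$ (whose Haar measure differs from that of $L_m$ only by the modular character of $\mathrm{Ad}(g)$ on $A$, which cancels after normalization), shows that $g\cdot\tilde{\theta}$ equals the $A$-average of $g\cdot\theta$, so $\|g\cdot\tilde{\theta}-\tilde{\theta}\|_{\cb}\le\|g\cdot\theta-\theta\|_{\cb}<\delta$ for all $g\in G$. Restricting $\tilde{\theta}$ to $A\times A$ and invoking Bochner's theorem then yields a probability measure $\mu$ on $\widehat{A}$ with $\tilde{\theta}(x,y)=\int_{\widehat{A}}\widehat{x^{-1}y}\,d\mu$ for $x,y\in A$. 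Via the dictionary in Section~\ref{sec:prelim}, approximate $G$-invariance of $\tilde{\theta}$ in cb-norm becomes approximate $G$-invariance of $\mu$ in total variation, and the original hypothesis forces $|1-\int\widehat{a}\,d\mu|$ to be small for $a\in K\cap A$.

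To finish, I would argue by contradiction using the invariant-mean hypothesis. If the proposition failed, an exhaustion $K_n\nearrow G$ and $\delta_n\to 0$ would produce a sequence of probability measures $\mu_n$ on $\widehat{A}$ as above with $\sup_{g\in K_n}\|g\cdot\mu_n-\mu_n\|\to 0$ but $\sup_{a,b\in A}|\tilde{\theta}_n(a,b)-1|\geq\e/2$. The latter prevents $\mu_n$ from concentrating at $\{0_A\}$: after passing to a subsequence, there exist a compact neighborhood $U$ of $0_A$ and $c>0$ with $\mu_n(\widehat{A}-U)\geq c$. Renormalizing $\mu_n|_{\widehat{A}-U}$ to probability measures $\nu_n$ on $\widehat{A}-\{0_A\}$, which remain asymptotically $G$-invariant, and taking an ultralimit in the Banach dual of $\ell_\infty(\Borel(\widehat{A}-\{0_A\}))$ would produce a $G$-invariant finitely additive probability measure on $\widehat{A}-\{0_A\}$, contradicting the hypothesis. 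The main obstacle is the uniform-in-$n$ extraction of the neighborhood $U$, where one must exploit the uniform continuity of characters near $0_A$ together with the persistent separation $\sup_{a,b\in A}|\tilde{\theta}_n(a,b)-1|\geq\e/2$ to guarantee that a fixed amount of mass of $\mu_n$ stays at a uniform distance from $\{0_A\}$ independently of $n$.
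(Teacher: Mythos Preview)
Your overall plan follows the template of Proposition~\ref{prop:nac}, as the paper intends, but there is a real gap where you pass from approximate $G$-invariance of the averaged kernel $\tilde{\theta}$ to approximate $G$-invariance of the measure $\mu$ in total variation. The dual action of $g\in G$ on $\widehat{A}$ comes from \emph{conjugation} on $A$, so the left-$A$-invariant kernel on $A\times A$ whose associated measure is $g\cdot\mu$ is $(a,b)\mapsto\tilde{\theta}(gag^{-1},gbg^{-1})$, not $(g\cdot\tilde{\theta})|_{A\times A}(a,b)=\tilde{\theta}(g^{-1}a,g^{-1}b)$. These differ by a right translation, and controlling that requires the \emph{second} hypothesis $\sup_{g^{-1}h\in K}|\theta(g,h)-1|<\delta$, which you never invoke at this stage. (Your change-of-variables argument is also off: conjugation by $g$ sends $(L_m)$ to a \emph{different} F{\o}lner sequence $(gL_mg^{-1})$, so $g\cdot\tilde{\theta}$ is the average of $g\cdot\theta$ for a different invariant mean, and $\|g\cdot\tilde{\theta}-\tilde{\theta}\|_{\cb}\le\|g\cdot\theta-\theta\|_{\cb}$ does not follow from that---though it does follow, up to a factor $3$, from $\|\tilde{\theta}-\theta\|_{\cb}<\delta$ and the triangle inequality.) This is precisely why the proof of Proposition~\ref{prop:nac} builds the conjugation by $g$ and the $K$-smoothing into the very definition of $\theta_a^g$: that construction guarantees that the measure attached to $\tilde{\theta}^g$ equals $g\cdot\mu_{\tilde{\theta}^1}$, and that $\|\tilde{\theta}^g-\tilde{\theta}^1\|_{\cb}$ is small via both hypotheses combined.

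On the final step, the obstacle you single out is not really there. You do not need a uniform neighborhood $U$: from $\sup_{a}|1-\int\widehat{a}\,d\mu_n|\ge\e/2$ one gets directly $\mu_n(\widehat{A}\setminus\{0_A\})\ge\e/4$, since $|1-\int\widehat{a}\,d\mu_n|\le 2\mu_n(\widehat{A}\setminus\{0_A\})$. Take the ultralimit of the full measures $\mu_n$ as a $G$-invariant finitely additive probability measure $\lambda$ on all of $\widehat{A}$; then $\lambda(\widehat{A}\setminus\{0_A\})\ge\e/4$, and since $\{0_A\}$ is $G$-fixed, the normalization of $\lambda|_{\widehat{A}\setminus\{0_A\}}$ is the forbidden invariant mean. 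Restricting to $\widehat{A}\setminus U$ \emph{before} the limit, as you propose, is in fact problematic because $g(\widehat{A}\setminus U)\neq\widehat{A}\setminus U$ in general, so the asymptotic $G$-invariance of $\nu_n$ is not clear.
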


Now, we prove one half of Theorem~\ref{sl}.
Let $R$ be a unital commutative ring.
We recall that an elementary matrix means
an element in $\SL_n(R)$ of the form $E_{i,j}(r)=I+ re_{i,j}$ for some $i\neq j$ and $r\in R$,
and $\EL_n(R)$ denotes the subgroup of $\SL_n(R)$ generated by elementary matrices.
The group $\EL_n(R)$ is \emph{boundedly elementary generated} if
there is a number $l=l(n,R)$ such that every element in $\EL_n(R)$ can be written
as a product of at most $l$ elementary matrices.
(See \cite{shalom:bdd} or Chapter 4 in \cite{bhv}.)
Thanks to the Gaussian elimination process, for any field $R$,
one has $\EL_n(R)=\SL_n(R)$ and it is boundedly elementary generated.
\begin{thm}\label{thm:sltp}
For any local field $\IK$ and $n\geq3$, the group $\SL_n(\IK)$ has property \Tp.
For any finitely generated unital commutative ring $R$ and $n\geq 3$ such that
$\EL_n(R)$ is boundedly elementary generated, the discrete group $\EL_n(R)$ has property \Tp.
\end{thm}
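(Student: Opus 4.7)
The plan is to follow the classical property-(T) bootstrap for $\SL_n$ with $n\geq3$, upgrading the relative-property-(T) input to relative property~\Tp{} via Proposition~\ref{prop:nac}, and then globalizing through bounded elementary generation.

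First, I would apply Proposition~\ref{prop:nac} to the pair $(G_0\ltimes A,A):=(\SL_2(R)\ltimes R^2,R^2)$, with $R=\IK$ a local field or $R$ the finitely generated commutative ring of the statement. This pair has relative Kazhdan's property---the classical Kazhdan--Margulis theorem in the local-field case, and a result of Shalom (see \cite{shalom:bdd} and Chapter~4 of \cite{bhv}) in the ring case. Proposition~\ref{prop:nac} upgrades this to relative property~\Tp{}; fix the associated $(K_0,\delta_0)$ for a target precision $\e'>0$ to be chosen later.

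Next I would exploit $n\geq3$ to embed $H:=\SL_2(R)\ltimes R^2$ into the ambient group in many ways: for every ordered pair of distinct indices $i\neq j$, pick a third index $k$, let the $\SL_2$-block act on coordinates $j,k$, and take $R^2$ to be $\{I+re_{ij}+se_{ik}:r,s\in R\}$, which is normalized by that block and contains the root subgroup $V_{ij}:=\{E_{ij}(r):r\in R\}$. Given a normalized positive definite kernel $\theta$ satisfying the hypothesis of \Tp{} with $(K,\delta)$, where $K$ absorbs $K_0$ under every such embedding (a finite union) and $\delta\le\delta_0$, the restriction $\theta|_{H\times H}$ satisfies the hypothesis of relative~\Tp{} for $(H,R^2)$---restriction does not increase the $\cb$-norm, cf.\ the end of Section~\ref{sec:prelim}---and the first step yields $|\theta(x,y)-1|<\e'$ for all $x,y\in V_{ij}$ and each $(i,j)$.

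Finally, to pass from the root subgroups $V_{ij}$ to the whole group, I would use bounded elementary generation. Write $\theta(x,y)=\langle P(x),P(y)\rangle$ with $\|P\|\equiv1$. Given $g$, factor $g=e_1\cdots e_\el$ with $\el\le l(n,R)$ and each $e_s$ elementary, and set $x_s:=e_1\cdots e_s$. Since the $\cb$-norm dominates the sup-norm, the almost-invariance $\|w\cdot\theta-\theta\|_{\cb}<\delta$ gives the pointwise bound $|\theta(x_s,x_{s+1})-\theta(1,e_{s+1})|<\delta$; combined with the previous step this yields $\|P(x_s)-P(x_{s+1})\|<\sqrt{2(\e'+\delta)}$. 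Telescoping gives $\|P(1)-P(g)\|\le l(n,R)\sqrt{2(\e'+\delta)}$, and one further use of left-invariance extends this to $|\theta(x,y)-1|\le\delta+l(n,R)\sqrt{2(\e'+\delta)}$ for all $x,y$, completing the proof once $\e'$ and $\delta$ are chosen small relative to $\e$ and $l(n,R)$.

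The main obstacle is the quantitative bookkeeping: $(\e',\delta)$ must be chosen after $\e$ so that $l(n,R)\sqrt{2(\e'+\delta)}<\e$, and $K$ must be enlarged to absorb each $K_0$. Bounded elementary generation is essential here---in the field case it is guaranteed by Gaussian elimination, in the ring case it is the standing hypothesis---because without a uniform bound on $\el$ the telescoped error would grow uncontrollably.
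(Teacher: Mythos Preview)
Your proposal is correct and follows essentially the same route as the paper's proof: upgrade relative property~(T) for the pair $(\EL_2(R)\ltimes R^2,\,R^2)$ to relative~\Tp{} via Proposition~\ref{prop:nac}, embed so as to capture each elementary root subgroup $E_{i,j}(R)$, and then telescope along a bounded elementary factorization using the almost left-invariance of $\theta$. One minor point: for a general ring $R$ you should write $\EL_2(R)$ rather than $\SL_2(R)$, since Shalom's relative-(T) input and the embeddings into $\EL_n(R)$ concern the elementary group, which need not coincide with $\SL_2(R)$.
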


\begin{proof}
Let $R=\IK$ or a finitely generated unital commutative ring, and $G=\EL_n(R)$.
The pair $R^2\triangleleft\EL_2(R)\ltimes R^2$ has relative property (T)
by Corollary 1.4.13 in \cite{bhv} and by \cite{shalom:bdd}.
Thus it has relative property {\Tp} as well by Theorem~\ref{thm:tpq}.
Let $\e>0$ be arbitrary and take $(K_0,\delta)$ which satisfies condition {\Tp}.
For each pair $i\neq j$, there is an embedding
$\sigma_{i,j}\colon\EL_2(R)\ltimes R^2\to\EL_n(R)$
such that $E_{i,j}(R)\subset\sigma_{i,j}(R^2)$.
Let $K=\bigcup \sigma_{i,j}(K)\subset \EL_n(R)$.
Suppose that $\theta$ is a normalized continuous positive definite kernel
on $G$ such that
\[
\sup_{g\in G}\| g\cdot\theta - \theta \|_{\cb} < \delta
\ \mbox{ and }\
\sup_{g^{-1}h\in K} | \theta(g,h) - 1 | < \delta.
\]
Then, by relative property {\Tp}, one has
\[
\sup_{s\in E_{i,j}(R)}| \theta(s,1) - 1 | < \e.
\]
It follows (see Section~\ref{sec:prelim}) that
\[
|\theta(gs,1)-\theta(g,1)|^2\le 2|1-\theta(gs,g)|<2(\delta+\e)<4\e
\]
for all $g\in G$ and $s\in E_{i,j}(R)$.
By bounded generation property, this implies $|1-\theta(x,1)|<2l\e^{\oh}$ for all $x\in G$.
\end{proof}

\begin{rem}
The group $\SL_n(\IK)$ actually have st.pr.{\Tp} in the sense of \cite{shalom:tams}.
Namely, one can take $K$ to be finite rather than compact, and
any wq-cocycle on $\SL_n(\IK)$, which is not assumed locally bounded,
is bounded. For the proof, mimic \cite{shalom:tams}, or use \cite{ct}.

There is a variant of Mautner's lemma:
Let $\theta$ be a normalized continuous positive definite kernel on $G$
such that $\| g\cdot\theta - \theta \|_{\cb} < \e$ for all $g\in G$,
and $x,y\in G$ be such that
$|\theta(y,1) -1 |<\e$ and $|\theta(y^{-1}xy,1) -1 |<\e$.
Then, $|\theta(x,1) - 1|<2\e+4\e^{\oh}$.
\end{rem}
\section{Induction and length-like functions}\label{sec:length}
In this section, we prove the remaining half of Theorem~\ref{sl}.
The proof will involve a general discussion about length-like functions
on measured-groupoids.

\begin{thm}\label{thm:lattice}
Let $G$ be a locally compact group and $\G\le G$ be a lattice.
Then, $G$ has property {\Tp} if and only if $\G$ has property {\Tp}.
\end{thm}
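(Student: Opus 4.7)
The two implications differ considerably in difficulty. For the direction $\Gamma$ has \Tp{} $\Rightarrow G$ has \Tp{}, I would proceed by restriction. Given $\varepsilon>0$, take $(K_\Gamma,\delta_\Gamma)$ satisfying \Tp{} for $\Gamma$ at level $\varepsilon$ and set $K_G\supseteq K_\Gamma$, $\delta_G\le\delta_\Gamma$. For a normalized continuous positive definite kernel $\theta$ on $G$ satisfying the \Tp{} hypotheses with parameters $(K_G,\delta_G)$, the restriction $\theta|_{\Gamma\times\Gamma}$ inherits these for $\Gamma$ via the inequality $\|\theta|_{\Gamma\times\Gamma}\|_{\cb}\le\|\theta\|_{\cb}$ from Section~\ref{sec:prelim}, so \Tp{} for $\Gamma$ yields $\theta\approx 1$ on $\Gamma\times\Gamma$. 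The small $\cb$-variation makes $\theta(g_1,g_2)\approx\theta(e,g_1^{-1}g_2)$ pointwise, and setting $\phi(h):=\theta(e,h)$ the task reduces to propagating $\phi\approx 1$ from $\Gamma\cup K_G$ to all of $G$ via the square-root subadditivity $|1-\phi(gh)|^{\oh}\le|1-\phi(g)|^{\oh}+|1-\phi(h)|^{\oh}$ (up to $\cb$-variation error); this is immediate in the cocompact case and reduces to a length-function estimate in general.

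For the principal direction $G$ has \Tp{} $\Rightarrow\Gamma$ has \Tp{}, the plan is to induce a kernel from $\Gamma$ to $G$. Let $\theta$ on $\Gamma$ satisfy the \Tp{} hypothesis with parameters to be chosen, write $\theta(\gamma_1,\gamma_2)=\langle P(\gamma_1),P(\gamma_2)\rangle$, and fix a Borel section $\sigma\colon X\to G$ of the quotient, where $X=G/\Gamma$ carries its $G$-invariant probability measure, together with the associated cocycle $\alpha\colon G\times X\to\Gamma$ defined by $g\sigma(g^{-1}x)=\sigma(x)\alpha(g,x)$. The induced kernel $\tilde\theta(g_1,g_2):=\int_X\theta(\alpha(g_1,x),\alpha(g_2,x))\,dx$ is a normalized continuous positive definite kernel on $G$. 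The cocycle identity $\alpha(g^{-1}g_i,x)=\alpha(g^{-1},x)\alpha(g_i,gx)$ combined with the change of variable $y=gx$ and a direct-integral estimate on the $\cb$-norm of an $X$-average (which preserves the bound because $X$ has mass one) gives $\|g\cdot\tilde\theta-\tilde\theta\|_{\cb}\le\sup_{\gamma\in\Gamma}\|\gamma\cdot\theta-\theta\|_{\cb}$ for every $g\in G$, so the $\cb$-variation of $\tilde\theta$ is automatically controlled. Once the near-diagonal condition is also verified, \Tp{} of $G$ gives $\tilde\theta\approx 1$ on $G\times G$, and evaluating at $g=\sigma(x)\gamma$ (so that $\alpha(g,x)=\gamma$) recovers $\theta\approx 1$ on $\Gamma$.

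The main obstacle is precisely the near-diagonal condition on $\tilde\theta$. For $g\in K_G$ compact one needs $\tilde\theta(g_1,g_1g)=\int\theta(\alpha(g_1,x),\alpha(g_1g,x))\,dx\approx 1$, which demands that $\alpha(g,\cdot)$ take values in a compact subset of $\Gamma$ for a.e.\ $x$; this fails for non-cocompact $\Gamma$, since the cusps of $G/\Gamma$ allow $\alpha(g,x)$ to be arbitrarily deep in $\Gamma$ even for $g$ near the identity. This is exactly where Theorem~\ref{length} replaces the Burger--Monod injectivity of $L^2$-induction on $H^2_b$. I would introduce the length-like function $\ell(x,g):=\|P(e)-P(\alpha(g,x))\|$ on the measured groupoid $G\ltimes X$; the almost left-invariance of the pseudo-metric $d_\theta(\gamma_1,\gamma_2)=\|P(\gamma_1)-P(\gamma_2)\|$, with defect absorbed by the $\cb$-variation of $\theta$, supplies the subadditivity hypothesis of Theorem~\ref{length}, and the essential-supremum condition $\esup_{g\in K_G}\int_X\ell(x,g)\,dx<\infty$ follows from the identity $\int_X\ell(x,g)^2\,dx=2-2\Re\tilde\theta(e,g)$ together with an initial smallness bound for $\tilde\theta(e,g)$ on a neighborhood of $e$. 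The resulting coboundary bound $\ell(x,g)\le h(x)+h(g^{-1}x)$ with $h\in L^1(X)$ is then absorbed into $\tilde P$ as a bounded inner perturbation, restoring the near-diagonal condition for a corrected induced kernel. This upgrade from pointwise-in-$x$ information to $\cb$-level control on $G$ is the crux of the theorem.
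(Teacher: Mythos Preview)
Your induction framework for the principal direction is correct, and the cb-variation estimate $\|g\cdot\tilde\theta-\tilde\theta\|_{\cb}\le\sup_{\gamma}\|\gamma\cdot\theta-\theta\|_{\cb}$ is the right one. But you have misidentified where Theorem~\ref{length} enters, and the step you treat as trivial is in fact the one that requires it.

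The near-diagonal condition on $\tilde\theta$ is \emph{not} the obstacle. It is handled by choosing the Kazhdan data for $\Gamma$ \emph{after} fixing $(K,\delta)$ for $G$: take a compact $X_0\subset X$ of measure $>1-\delta/4$ and set $F=\{\beta(x,g):x\in X_0,\,g\in K\}$, which is finite since $\sigma$ can be taken regular. For $g^{-1}h\in K$ one has $\beta(x,g)^{-1}\beta(x,h)=\beta(g^{-1}x,g^{-1}h)\in F$ whenever $x\in gX_0$, so $|\tilde\theta(g,h)-1|<\delta/2+2|X\setminus gX_0|<\delta$. No correction of the kernel is needed. Your intended application of Theorem~\ref{length} at this stage would fail anyway: you only have $\int_X\ell(x,g)\,dx$ controlled for $g$ in a compact set, whereas the theorem requires the bound uniformly over all $g\in G$; and the promised ``bounded inner perturbation'' cannot come from an $h$ that is merely in $L^1(X)$.

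The genuine obstacle is the step you dispose of as ``evaluating at $g=\sigma(x)\gamma$.'' The conclusion $\sup_g|\tilde\theta(g,1)-1|<\e$ says only that the \emph{average} $\int_X\theta(\beta(y,g),1)\,dy$ is close to $1$ for each $g$; no choice of $g$ collapses this integral to a single value $\theta(s,1)$, since the integration variable $y$ ranges over all of $X$. This is precisely where Theorem~\ref{thm:oid} enters in the paper: from $|\tilde\theta(g,1)-1|<\e$ for every $g\in G$ one gets $\int_X\ell(x,g)\,dx<3\e^{1/2}$ for every $g\in G$ (so now the hypothesis is met), and the resulting $h\in L^1(X)$ with $\ell(x,g)\le h(x)+h(g^{-1}x)$ allows one to pick $x,y$ in a sublevel set of $h$ and take $g=\sigma(x)s\sigma(y)^{-1}$, yielding $\|P(s)-P(1)\|\le\ell(x,g)$ uniformly bounded in $s\in\Gamma$. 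The length-function theorem thus converts the integral bound on $G$ into the desired pointwise bound on $\Gamma$; that is its role here, not repairing the near-diagonal hypothesis. (For the converse direction the paper likewise avoids any length-function argument: after restriction gives $|\theta(s,1)-1|<\e$ on $\Gamma$, one averages the identity $g\sigma(x)\beta(x,g^{-1})=\sigma(gx)$ over $x$ together with $|\sigma(X)\cap K|>1-\e$.)
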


Now Theorem~\ref{sl} follows from
Theorems \ref{thm:sltp}, \ref{thm:lattice} and \ref{thm:tpq}.
We remark that property {\Tp} is moreover a measure-equivalence invariant,
and the same thing holds for property {\Tq}.
On the other hand, it is unclear whether property {\TTT} is inherited
to a lattice unless the lattice is cocompact, because one needs a certain
integrability condition to induce wq-cocycles.
We do not prove these facts, because we will not (probably ever) need them.
For the proof of Theorem~\ref{thm:lattice}, we use a random walk technique,
in particular double ergodicity of a Poisson boundary, which is also a key
ingredient in the proof of the fact that property (TT) is inherited to
lattices (\cite{burger-monod,burger-monod2}).
Thus, we fix a symmetric non-degenerate probability measure $\mu$ on $G$,
which is absolutely continuous with respect to the Haar measure.
Such a measure $\mu$ always exists (because we are assuming that $G$ is second countable).
Let $V$ be a coefficient $G$-module
(i.e., $V$ is a dual Banach space on which $G$ acts by weak$^*$-continuous isometries)
and $f\in L^\infty(G,V)$.
We define
\[
(\mu*f)(g)=\int_G s\cdot f(s^{-1}g)\,d\mu(s)
\ \mbox{ and }\
(f*\mu)(g)=\int_G f(gt^{-1})\,d\mu(t).
\]
The following is an incarnation of double ergodicity of a Poisson boundary (\cite{kaimanovich}).
It is also considered as a noncommutative Choquet--Deny theorem with coefficients (cf.\ Theorem 1 in \cite{willis}).
\begin{lem}\label{lem:ncd}
Assume $V$ is a separable coefficient $G$-module and
$f\in L^\infty(G, V)$ is such that $\mu*f=f=f*\mu$.
Then, there exists a $G$-invariant vector $v_0$ in $V$ such that
$f=v_0$ almost everywhere.
\end{lem}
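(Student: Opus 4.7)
The plan is to represent $f$ via boundary values on the Poisson boundary $(B,\nu)$ of $(G,\mu)$ and then invoke Kaimanovich's double ergodicity with coefficients \cite{kaimanovich}: every weak$^*$-measurable $G$-equivariant map from $(B\times B,\nu\otimes\nu)$ (with diagonal action) into a separable coefficient $G$-module is essentially constant. Since $\mu$ is symmetric, non-degenerate and Haar-absolutely continuous, $(B,\nu)$ is well-defined and carries a quasi-invariant $G$-action for which this holds.

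The first step is to translate the two harmonicity conditions into martingale statements. Setting $F(g):=g^{-1}\cdot f(g)$, a direct computation converts the twisted left-harmonicity $\mu*f=f$ into the untwisted identity $F(g)=\int_G F(sg)\,d\mu(s)$ (using $\check\mu=\mu$). Along the $\mu$-random walk starting at any point, $F$ becomes a bounded $V$-valued martingale; since $V$ is a separable dual, a weak$^*$-martingale convergence argument (most conveniently framed via an ultralimit in $V$, as in Section~\ref{sec:prelim}) produces a tail-measurable limit which factors through the forward Poisson boundary as a map $\phi^+\in L^\infty(B,V)$. The untwisted right-harmonicity $f*\mu=f$ is handled symmetrically: running the reversed walk yields a backward-boundary function $\phi^-\in L^\infty(B,V)$, and the symmetry of $\mu$ identifies the backward boundary with $(B,\nu)$.

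To marry the two pictures I would splice independent forward and backward walks into a bilateral $\mu$-random walk $(Z_n)_{n\in\IZ}$; its two tails land at independent $\nu$-distributed points $b^\pm\in B$. The simultaneous tail limits of $F$ and of $f$ along $(Z_n)$ then assemble into a weak$^*$-measurable $G$-equivariant map $\Phi\colon B\times B\to V$ satisfying $\Phi(gb^-,gb^+)=g\cdot\Phi(b^-,b^+)$ almost everywhere, which recovers $f$ via the bilateral Poisson formula. Double ergodicity forces $\Phi$ to be essentially constant, equal to some $v_0\in V$; combined with equivariance this gives $g\cdot v_0=v_0$ for a.e.\ $g$, hence for every $g$ by weak$^*$-continuity of the action, so $v_0\in V^G$. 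The Poisson representation then yields $f(g)=v_0$ almost everywhere, which is the claim.

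The main technical obstacle is the construction of this single $G$-equivariant $\Phi$ on $B\times B$ out of the two one-sided boundary maps $\phi^\pm$. Both the symmetry of $\mu$ (to identify the forward and backward boundaries) and the separability of $V$ (to run weak$^*$-martingale convergence for $V$-valued processes) are used in an essential way; once $\Phi$ is in hand, the appeal to Kaimanovich's double ergodicity theorem is immediate and the remaining bookkeeping is routine.
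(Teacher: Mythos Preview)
Your approach is correct and essentially the paper's: both use martingale convergence followed by Kaimanovich's double ergodicity with separable coefficients \cite{kaimanovich}. The paper works directly on the bilateral path space $(G,\mu)^{\IZ}$ rather than on $B\times B$, defining $F_{m,n}((g_k))=g_0^{-1}\cdots g_{-m}^{-1}\cdot f(g_{-m}\cdots g_0 g_1\cdots g_n)$ and checking that its a.e.\ limit is shift-equivariant so that Theorem~6 of \cite{kaimanovich} applies directly---this sidesteps the gluing of $\phi^\pm$ into a single $G$-equivariant $\Phi$ that you flag as the main technical obstacle.
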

\begin{proof}
For $m,n\in\IN$, define $F_{m,n}\colon (G,\mu)^{\IZ}\to V$ by
\[
F_{m,n}((g_k)_{k\in\IZ})=g_{0}^{-1}\cdots g_{-m}^{-1}\cdot f(g_{-m}\cdots g_0g_1\cdots g_n).
\]
By the martingale convergence theorem, $(F_{m,n})$ converges
a.e.\ as $m,n\to\infty$.
The limit function $F$ satisfies $F((g_{k+1})_{k\in\IZ})=g_1^{-1}F((g_k)_{k\in\IZ})$,
and hence is constant by Theorem 6 in \cite{kaimanovich}, say $F=v_0$.
Note that $v_0$ is a $G$-invariant vector.
Since $(F_{m,n})$ is uniformly bounded,
for every measurable subsets $B_1,\ldots,B_l\subset G$, one has
\begin{align*}
\int_G f(g)\,d\big((\mu|_{B_1})*\cdots*(\mu|_{B_l})\big)(g)
&=\int_{G^{\IZ}} F_{m,n}(\pmb{g})\chi_{B_1}(g_1)\cdots\chi_{B_l}(g_l)\,d\mu^{\infty}(\pmb{g})\\
&\to\mu(B_1)\cdots\mu(B_l)\,v_0\quad\mbox{ as $m,n\to\infty$}.
\end{align*}
This implies that $f=v_0$ almost everywhere.
\end{proof}

\begin{thm}\label{thm:oid}
Let $G\curvearrowright X$ be a measure-preserving action
on a standard probability space $X$,
$C\geq1$ and $\el\colon X\times G\to\IR_{\geq0}$
be a measurable function such that
\[
\el(x,gh) \le C( \el(x,g) + \el(g^{-1}x,h) )
\]
for a.e.\ $(x,g,h) \in X\times G\times G$.
Assume that
\[
D:=\limsup_{n\to\infty}\int_G\int_X \el(x,g)\,dx\,d\mu^{*n}(g)<+\infty.
\]
Then, there exists $h\in L^1(X)$ such that $\|h\|_1\le4C^4D$ and
\[
\el(x,g)\le h(x)+h(g^{-1}x)
\]
almost everywhere.
\end{thm}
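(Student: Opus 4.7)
The strategy is a measured-groupoid analogue of Burger--Monod's bounded-cohomology argument. The plan is to apply the quasi-cocycle inequality twice so as to decompose $\el$ into two one-sided averages plus a doubly-averaged remainder, and then invoke Lemma~\ref{lem:ncd} (noncommutative Choquet--Deny) to force the remainder to be $G$-invariant in $x$ and independent of $g$, so that it splits evenly between the source and the target in the desired coboundary bound.

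Concretely, I would first set
\[
h_m(x) := \int_G \el(x,s)\,d\mu^{*m}(s), \qquad \tilde h_n(y) := \int_G \el(sy,s)\,d\mu^{*n}(s).
\]
Fubini, the measure-preservation of the action, and symmetry of $\mu$ yield $\|h_m\|_1,\|\tilde h_n\|_1 \le D+o(1)$. Next, applying the quasi-cocycle inequality twice---first splitting $g=s\cdot(s^{-1}g)$, then splitting $s^{-1}g=(s^{-1}gt^{-1})\cdot t$---and averaging in $s\sim\mu^{*m}$ and $t\sim\mu^{*n}$ (using symmetry of $\mu^{*n}$ to replace $t^{-1}$ by $t$), one arrives at the master inequality
\[
\el(x,g) \le C\,h_m(x) + C^2\,\tilde h_n(g^{-1}x) + C^2\,F_{m,n}(x,g),
\]
where $F_{m,n}(x,g) := \iint \el(s^{-1}x,s^{-1}gt)\,d\mu^{*m}(s)\,d\mu^{*n}(t)$. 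A direct change of variable (using $\mu*\mu^{*m}=\mu^{*(m+1)}$ and the analogous identity on the right, together with symmetry of $\mu$) gives the bi-harmonicity identities
\[
\int F_{m,n}(s^{-1}x,s^{-1}g)\,d\mu(s)=F_{m+1,n}(x,g), \quad \int F_{m,n}(x,gt^{-1})\,d\mu(t)=F_{m,n+1}(x,g),
\]
together with $\|F_{m,n}(\cdot,1)\|_{L^1(X)}\le D+o(1)$ (substitute $x'=s^{-1}x$ and use symmetry of $\mu$).

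To invoke Lemma~\ref{lem:ncd}, I would truncate $\el$ at level $M$ to obtain $F^M_{m,n}$ with $\|F^M_{m,n}\|_\infty\le M$, and view $g\mapsto F^M_{m,n}(\cdot,g)$ as an element of $L^\infty(G,L^2(X))$---with $L^2(X)$ a separable coefficient $G$-module. The bi-harmonicity identities persist under the Banach limit $F^M_\infty:=\Lim_{m,n}F^M_{m,n}$, so Lemma~\ref{lem:ncd} forces $F^M_\infty(\cdot,g)=v_0^M$ for some $G$-invariant $v_0^M\in L^\infty(X)$ independent of $g$, with $\|v_0^M\|_1\le D$ (by evaluating at $g=1$). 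Letting $M\nearrow\infty$, the monotone supremum $F_*:=\sup_M v_0^M$ is $G$-invariant and lies in $L^1(X)$ with $\|F_*\|_1\le D$. Analogous truncated Banach limits of $h_m$ and $\tilde h_n$ give $h_\infty,\tilde h_\infty\in L^1(X)$ with norms $\le D$. Using the $G$-invariance identity $F_*(x)=F_*(g^{-1}x)$ to split $C^2F_*(x)=\tfrac{C^2}{2}\bigl(F_*(x)+F_*(g^{-1}x)\bigr)$, I would set
\[
h(x) := C h_\infty(x) + C^2 \tilde h_\infty(x) + \tfrac{C^2}{2}F_*(x),
\]
which yields $\el(x,g)\le h(x)+h(g^{-1}x)$ almost everywhere, and $\|h\|_1\le(C+\tfrac{3}{2}C^2)\,D\le 4C^4D$ since $C\ge 1$.

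The main obstacle is bridging the truncation step: $F_{m,n}$ itself only lies in $L^1$, so Lemma~\ref{lem:ncd} applies only to the truncated $F^M_{m,n}$, and one must guarantee that $F_*$ honestly dominates the genuine remainder $F_{m,n}(x,g)$ along some subsequence in the master inequality---not merely its truncation. The a.e.\ bound $\Lim_{m,n}f_{m,n}\ge\liminf_{m,n}f_{m,n}$ for Banach limits in $L^\infty$ recorded in Section~\ref{sec:prelim}, combined with monotone convergence in $M$ and a judicious diagonal extraction of $(m_k,n_k,M_k)$, is the technical device needed to close this gap.
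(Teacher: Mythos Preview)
Your overall strategy is the paper's: form the two-sided convolution $F_{m,n}(x,g)=\iint\el(s^{-1}x,s^{-1}gt)\,d\mu^{*m}(s)\,d\mu^{*n}(t)$ (which is exactly $(\mu^{*m}*\el*\mu^{*n})(x,g)$ after a symmetric change of variable), truncate, pass to a Banach limit, and invoke Lemma~\ref{lem:ncd} to kill the $g$-dependence. So the architecture is correct and matches the paper.

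The gap you flag yourself, however, is not closed by the device you propose. A ``diagonal extraction of $(m_k,n_k,M_k)$'' cannot bridge the master inequality for $\el$ to the Banach limit of the truncated $F^M_{m,n}$: the problem is the wrong-way swap $\sup_M\liminf_{m,n}F^M_{m,n}\le\liminf_{m,n}\sup_M F^M_{m,n}=\liminf_{m,n}F_{m,n}$, and no subsequence repairs this uniformly a.e.\ in $(x,g)$. The paper's remedy is much simpler and should be made explicit in your argument: the \emph{pointwise} inequality $a\le Cb+C^2c+C^2d$ with $a,b,c,d\ge0$ and $C\ge1$ implies $\min(a,R)\le C\min(b,R)+C^2\min(c,R)+C^2\min(d,R)$ (if one of $b,c,d$ exceeds $R$ the right side is already $\ge CR\ge R$; otherwise it equals the original right side). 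Hence truncate \emph{before} averaging: for each $R$ you get
\[
\el_R(x,g)\le C\,h^R_m(x)+C^2 F^R_{m,n}(x,g)+C^2\,\tilde h^R_n(g^{-1}x),
\]
bound $h^R_m\le h_m$, $\tilde h^R_n\le\tilde h_n$, take $\liminf_{m,n}$, use $\liminf_{m,n}F^R_{m,n}\le v_0^R\le F_*$ a.e.\ from Section~\ref{sec:prelim}, and finally let $R\to\infty$. This is precisely what the paper does.

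Two smaller points. First, ``evaluating at $g=1$'' to get $\|v_0^M\|_1\le D$ is not legitimate: $v_0^M$ is a weak$^*$ limit in $L^\infty(X\times G)$, and a single value of $g$ carries no mass. The paper integrates over a set $A\subset G$ of Haar measure $1$ chosen inside $\{g:\int_X\el(x,g)\,dx\le 2D\}$ and then applies the quasi-cocycle inequality once more; this is why the final constant is $4C^4D$ rather than your $(C+\tfrac32 C^2)D$. Second, your $h_\infty,\tilde h_\infty$ need no ``truncated Banach limit''---just take $\liminf_m h_m$ and $\liminf_n\tilde h_n$ and invoke Fatou, as the paper does.
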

\begin{proof}
Take $R>0$ arbitrary. Let $\el_R=\min(\el,R)$ and consider
\[
f_R:=\Lim_{m,n}\,\mu^{*m}*\el_R*\mu^{*n} \in L^\infty(X\times G)\cong L^\infty(G,L^\infty(X)),
\]
where the limit is taken along an invariant mean $\Lim_{m,n}$ on $\IN^2$
with respect to the weak$^*$-topology on $L^\infty(X\times G)$.
(See the discussion in Section~\ref{sec:prelim}.)
Since the convolutions by $\mu$ are weak$^*$-continuous operators
on $L^\infty(X\times G)$, one has $\mu*f_R=f_R=f_R*\mu$.
Since $L^\infty(X)$ is contained in the separable coefficient $G$-module $L^2(X)$,
Lemma~\ref{lem:ncd} implies that $f_R$ belongs to $L^\infty(X)$ and is $G$-invariant.
We note that
\[
(\mu^{*m}*\el_R*\mu^{*n})(x,g)=\int_{G^2} \el_R(sx,sgt)\,d\mu^{*m}(s)\,d\mu^{*n}(t).
\]
Choose a subset $A\subset\{ g\in G : \int_X \el(x,g)\,dx \le 2D\}$ of measure $1$
(it is not difficult to see that the latter set has infinite measure).
One has
\begin{align*}
\|f_R & \|_{L^1(X)} = \int_{X\times A} f_R(x,g)\,d(x,g)\\
&=\Lim_{m,n} \int_{X\times A} (\mu^{*m}*\el_R*\mu^{*n})(x,g)\,d(x,g)\\
&\le \Lim_{m,n}C^2\int_{X\times A}\int_{G^2}\left(\begin{array}{l}
  \el(sx,s)+\el(x,g) \\ \quad+\el(g^{-1}x,t)\end{array}\right)
  \,d\mu^{*m}(s)\,d\mu^{*n}(t)\,d(x,g)\\
&\le 4C^2D.
\end{align*}
We note that $f_R$ is monotone increasing in $R$ and
define
\[
h(x)=\frac{1}{2}C^2\lim_{R\to\infty}f_R(x)+C^2\liminf_{n\to\infty}\int_G \el(x,s)+\el(s^{-1}x,s^{-1})\,d\mu^{*n}(s).
\]
By Fatou \& Fubini, $h\in L^1(X)$ with $\|h\|_1\le 4C^4D$.
Moreover, one has
\begin{align*}
\el_R & (x,g) = \liminf_{m,n}\int_{G^2} \el_R(x,g)\,d\mu^{*m}(s)\,d\mu^{*n}(t)\\
&\le C^2\liminf_{m,n}\int_{G^2}\left(\begin{array}{l}
 \el_R(x,s^{-1})+\el_R(sx,sgt)\\ \quad+\el_R(t^{-1}g^{-1}x,t^{-1})\end{array}\right)
 \,d\mu^{*m}(s)\,d\mu^{*n}(t)\\
&\le C^2\liminf_{m,n}\left(\begin{array}{l}\int_{G} \el(x,s)\,d\mu^{*m}(s)+(\mu^{*m}*\el_R*\mu^{*n})(x,g)\\
    \quad+\int_{G} \el(t^{-1}g^{-1}x,t^{-1})\,d\mu^{*n}(t)\end{array}\right)\\
&\le h(x)+h(g^{-1}x)
\end{align*}
for a.e.\ $(x,g)\in X\times G$ and $R>0$.
This completes the proof.
\end{proof}

Now, let $\G\le G$ be a lattice.
By rescaling, we assume that $X=G/\G$ is a probability $G$-space.
Choose a Borel lifting $\sigma\colon X\to G$ and denote by $\beta\colon X\times G\to\G$
the associated cocycle given by
\[
\beta(x,g)=\sigma(x)^{-1}g\sigma(g^{-1}x).
\]
It satisfies the cocycle relation
\[
\beta(x,gh)=\beta(x,g)\beta(g^{-1}x,h)\mbox{ and }\beta(x,g)^{-1}=\beta(g^{-1}x,g^{-1}).
\]
We note the following fact, which has its own interest and can be used
to prove that property {\TTT} is inherited to cocompact lattices.
\begin{cor}
Let $\G\le G$ be a lattice and $\el\colon\G\to\IR_{\geq0}$ be a function
such that $\el(gh)\le\el(g)+\el(h)$ for all $g,h\in \G$, and let
\[
L(g):=\int_{G/\G}\el(\beta(x,g))\,dx\in [0,+\infty].
\]
If $L$ is essentially bounded, then $\el$ is bounded.
\end{cor}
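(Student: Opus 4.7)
The plan is to lift $\el$ to a length-like function on the transformation groupoid $X \times G$, apply Theorem~\ref{thm:oid}, and then descend the resulting domination back to $\G$ using a measure-preserving bijection $X \times G \cong X \times X \times \G$ afforded by the cocycle $\beta$.

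First, I would set $\tilde{\el}(x, g) := \el(\beta(x, g))$. The cocycle identity $\beta(x, gh) = \beta(x, g)\beta(g^{-1}x, h)$ together with subadditivity of $\el$ yields
\[
\tilde\el(x, gh) \le \tilde\el(x, g) + \tilde\el(g^{-1}x, h),
\]
so the hypothesis of Theorem~\ref{thm:oid} holds with $C = 1$; moreover $\int_X \tilde\el(x, g)\,dx = L(g)$, which gives $D \le \|L\|_\infty < \infty$. Theorem~\ref{thm:oid} then produces $h \in L^1(X)$, which I may take nonnegative, satisfying
\[
\el(\beta(x, g)) \le h(x) + h(g^{-1}x)
\]
for a.e.\ $(x, g) \in X \times G$.

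Next, I would refactor the measure on $X \times G$ via the cocycle. The map
\[
\Psi \colon X \times X \times \G \to X \times G, \qquad \Psi(x, y, \gamma) = \bigl(x,\sigma(x)\gamma\sigma(y)^{-1}\bigr),
\]
is a Borel bijection whose inverse is $(x, g) \mapsto (x, g^{-1}x, \beta(x, g))$. Endowing $\G$ with counting measure, $\Psi$ is measure-preserving: this follows from the standard decomposition $G \cong X \times \G$ furnished by $\sigma$, combined with left- and inversion-invariance of Haar measure on $G$ (unimodularity of $G$ being automatic since $G$ admits a lattice). Pulling the a.e.\ bound through $\Psi$, the set $\{(x, y, \gamma) : \el(\gamma) > h(x) + h(y)\} \subset X \times X \times \G$ is null. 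Because counting measure on $\G$ assigns positive mass to every singleton, Fubini upgrades ``null'' to the statement that for \emph{every} $\gamma \in \G$, the slice $\{(x, y) \in X \times X : \el(\gamma) > h(x) + h(y)\}$ has Lebesgue measure zero. Passing to the essential infimum then gives
\[
\el(\gamma) \le 2\,\mathrm{ess\,inf}_{x \in X}\, h(x) \quad \text{for every } \gamma \in \G,
\]
and the right-hand side is finite (at most $2\|h\|_1$ by Chebyshev, since $X$ is a probability space), which is the desired uniform bound.

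The main obstacle is the change of variables. The a.e.\ bound from Theorem~\ref{thm:oid} lives on $X \times G$, but each $\gamma \in \G$ corresponds to a measure-zero slice therein, so a direct restriction to such a slice is vacuous. Replacing Haar on $G$ by the product of Haar on $X$ and counting on $\G$ via the cocycle is precisely what converts the ``almost every'' conclusion on $X \times G$ into an ``every'' conclusion on the countable set $\G$. Carefully verifying that $\Psi$ is genuinely measure-preserving is a routine but non-trivial Haar-bookkeeping exercise, and constitutes the principal technical point.
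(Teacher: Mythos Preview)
Your proposal is correct and follows essentially the same route as the paper: lift $\el$ to the groupoid via $\beta$, apply Theorem~\ref{thm:oid}, and then descend using the same substitution $g=\sigma(x)\gamma\sigma(y)^{-1}$ (which the paper invokes more tersely, without spelling out that $\Psi$ is measure-preserving). The only cosmetic difference is the endgame: the paper bounds $\el$ by picking a sublevel set $X_0=\{h\le 5\|L\|_\infty\}$ and getting $\el(s)\le 10\|L\|_\infty$, whereas you pass to the essential infimum and obtain the slightly sharper $\el(\gamma)\le 2\,\mathrm{ess\,inf}\,h\le 2\|h\|_1$.
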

\begin{proof}
We consider the function $\el(\beta(x,g))$ on the groupoid $X\times G$,
where $X=G/\G$.
By Theorem~\ref{thm:oid}, there is $h\in L^1(X)$ such that
$\|h\|_1\le 4\|L\|_\infty$ and $\el(\beta(x,g))\le h(x)+h(g^{-1}x)$.
Let $X_0=\{ x : h(x)\le5\|L\|_\infty\}$, which is non-negligible.
Then, for every $s\in\G$ and a.e.\ $x,y\in X_0$, one has
\[
\el(s)=\el(\beta(x,\sigma(x)s\sigma(y)^{-1}))\le h(x)+h(y)\le 10\|L\|_\infty.
\]
This completes the proof.
\end{proof}

\begin{proof}[Proof of Theorem~\ref{thm:lattice}]
First, we suppose that $G$ has property {\Tp} and prove $\G$ has the same.
Let $\e>0$ be given and take $(K,\delta)$ which satisfies condition {\Tp} for $G$.
We may assume that the lifting $\sigma\colon X\to G$ is regular in the sense that
it maps a compact subset of $X=G/\G$ to a relatively compact subset of $G$.
Choose a compact subset $X_0\subset X$ whose measure is at least $1-\delta/4$,
and let $F=\{\beta(x,g) : x\in X_0,\ g\in K\}$, which is a finite subset in $\G$.
We will prove that $(F,\delta/2)$ satisfies condition {\Tp} for $\G$.
To do so, let $\theta\colon\G\times\G\to\IC$ be a normalized positive
definite kernel such that
\[
\sup_{s\in\G}\| s\cdot\theta - \theta \|_{\cb} < \delta/2
\ \mbox{ and }\
\sup_{s^{-1}t\in F} | \theta(s,t) - 1 | < \delta/2.
\]
We induce $\theta$ from $\G$ to $G$ by defining
\[
\tilde{\theta}(g,h)=\int_X\theta(\beta(x,g),\beta(x,h))\,dx.
\]
Then, $\tilde{\theta}$ is a normalized Borel positive definite kernel such that
\[
\sup_{g\in G}\|g\cdot\tilde{\theta}-\tilde{\theta}\|_{\cb}
\le\sup_{g\in G}\int_X\|\beta(x,g)\cdot\theta-\theta\|\,dx < \delta/2
\]
and, since $\beta(x,g)^{-1}\beta(x,h)=\beta(g^{-1}x,g^{-1}h) \in F$ for
$x\in gX_0$ and $g^{-1}h\in K$,
\[
\sup_{g^{-1}h\in K}|\tilde{\theta}(g,h)-1|
\le \sup_{g^{-1}h\in K}\int_{gX_0}|\theta(\beta(x,g),\beta(x,h))-1|\,dx +\delta/2< \delta.
\]
It follows from property {\Tp} that
\[
\sup_{g\in G}|\tilde{\theta}(g,1)-1|<\e.
\]
We express $\theta$ as $\theta(s,t)=\ip{P(s),P(t)}$ and define $\el\colon X\times G\to\IR_{\geq0}$
by
\[
\el(x,g)=\|P(\beta(x,g))-P(1)\| + \delta^{\oh}.
\]
Since $\|P(st)- P(s)\|^2 < \|P(t)-P(1)\|^2+\delta$ for all $s,t\in\G$, one has
\begin{align*}
\el(x,gh) &\le \el(x,g) + \|P(\beta(x,g)\beta(g^{-1}x,h))-P(\beta(x,g))\|\\
 &\le \el(x,g) + \el(g^{-1}x,h).
\end{align*}
Moreover,
\[
\int_X\el(x,g)\,dx\le\left(\int_X\|P(\beta(x,g))-P(1)\|^2\,dx\right)^{\oh}
  + \delta^{\oh} < 3\e^{\oh}
\]
for all $g\in G$.
By Theorem~\ref{thm:oid}, there is $h\in L^1(X)$ such that
$\|h\|_1\le 12\e^{\oh}$ and $\el(x,g)\le h(x)+h(g^{-1}x)$ a.e.
Let $X_0=\{ x : h(x)<13\e^{\oh}\}$, which is non-negligible.
Then, for every $s\in\G$ and a.e.\ $x,y\in X_0$, one has
\[
|1-\theta(s,1)|\le\frac{1}{2}\|P(s)-P(1)\|^2
 \le\frac{1}{2}\el(x,\sigma(x)s\sigma(y)^{-1})^2 <100\e.
\]
This proves that $\G$ has property {\Tp}.
We just mention that the proof of measure-equivalence invariance of property {\Tp}
is similar to above.

Next, we suppose $\G$ has property {\Tp} and prove $G$ has the same.
Let $\e>0$ be given and take $(F,\delta)$ which satisfies condition {\Tp} for $\G$.
We take a compact subset $K\subset G$ such that $F\subset K$ and
$|K\cap\sigma(X)|>1-\e$.
We will prove that $(K,\delta)$ satisfies condition {\Tp} for $G$.
To do so, let $\theta\colon G\times G\to\IC$ be a normalized continuous
positive definite kernel such that
\[
\sup_{g\in G}\|g\cdot\theta-\theta\|_{\cb}<\delta
\ \mbox{ and }\
\sup_{g^{-1}h\in K}|\theta(g,h)-1| < \delta.
\]
Then, property {\Tp} implies
\[
\sup_{s\in\G}|\theta(s,1)-1| < \e.
\]
It follows (see Section~\ref{sec:prelim}) that for any $g\in G$, $y\in X$ and $s\in\G$, one has
\begin{align*}
|\theta(g\sigma(y)s,g)-1| 
&< \delta+|\theta(\sigma(y)s,\sigma(y))-1|+\sqrt{2}|\theta(\sigma(y),1)-1|^{1/2}\\
&< 2\delta+\e+\sqrt{2}|\theta(\sigma(y),1)-1|^{1/2}.
\end{align*}
Hence,
\[
|1 - \int_X \theta(g\sigma(x)\beta(x,g^{-1}),g)\,dx | < 6\e^{1/2}
\]
for every $g\in G$. On the other hand, since $g\sigma(x)\beta(x,g^{-1})=\sigma(gx)$,
\[
\int_X \theta(g\sigma(x)\beta(x,g^{-1}),g)\,dx
= \int_X \theta(\sigma(gx),g)\,dx\approx_{\e+(2\delta)^{1/2}} \theta(1,g).
\]
Therefore,
$|\theta(g,1)-1|\le 10\e^{\oh}$ for all $g\in G$.
This completes the proof.
\end{proof}

\begin{proof}[Proof of Corollary~\ref{cor:ug}]
Let $\e>0$ be given and take $(K,\delta)$ which satisfies condition {\Tq}.
Let $\pi'\colon G'\to\cU(\cH)$ be a $C_0$ unitary representation
which has approximately $G'$-invariant unit vectors $\xi_n$.
(See Theorem 2.1.1 in \cite{ccjjv}).
We consider $\pi\colon G\to\cU(L^2(X,\cH))$ defined by
\[
(\pi(g)\xi)(x)=\pi'(\beta(x,g))\xi(g^{-1}x),
\]
(see the remark at the end of this proof) and let
\[
D_n(x)=\sup_{g,h\in G}\|\xi_n-\pi'\big(\beta(x,gh)^{-1}\beta(x,g)\beta(g^{-1}x,h)\big)\xi_n\|.
\]
We view $\xi_n\in\cH$ as constant vectors in $L^2(X,\cH)$.
Since $D_n(x)\le 2$ and $D_n(x)\to0$ for a.e.\ $x\in X$ by  assumption, one has
\[
\sup_{g,h\in G}\| \pi(gh)\xi_n-\pi(g)\pi(h)\xi_n\|^2
\le \int_X D_n(x)^2\,dx \to 0,
\]
and
\[
\sup_{g\in K}\|\pi(g)\xi_n-\xi_n\|^2=\sup_{g\in K}\int_X \|\pi'(\beta(x,g))\xi_n-\xi_n\|^2\,dx\to0.
\]
Hence by property {\Tq}, there is $n$ such that $\xi=\xi_n$ satisfies
\[
\sup_{g\in G}\|\pi(g)\xi-\xi\|<\e\mbox{ and }\int_X D(x)\,dx<\e.
\]
Then,
$\el(x,g)=\|\pi'(\beta(x,g))\xi-\xi\|+D(x)$
satisfies $\el(x,gh)\le 2\el(x,g)+\el(g^{-1}x,h)$ and $\sup_g\int_X\ell(x,g)\,dx\le 2\e$.
Hence, by Theorem~\ref{thm:oid}, there is $h\in L^1(X)$ such that
$\|h\|_1\le 2^7\e$ and $\el(x,g)\le h(x)+h(g^{-1}x)$ a.e.
Then, $X'=\{ x : h(x)<1/4\}$ has measure at least $1-2^9\e$.
Since $\pi'$ is a $C_0$-representation, $\{\beta(x,g) : x,g^{-1}x\in X'\}$
is relatively compact in $G'$.

\emph{Remark.} The map $\pi$, defined as above, is in general Haar measurable
instead of Borel measurable. To fix this problem, either go through all proofs
in this paper with measurable maps and $\esup$ in place of Borel maps
and $\sup$, or take an ad hoc measure as follows: there is a null set $N$ such
that $\pi$ is Borel on $G\setminus N$. Let $K$ be any compact neighborhood of $G$.
By the Lusin--Novikov uniformization theorem, one can find a Borel map
$t\colon G\to K$ such that $gt_g^{-1},t_g\in G\setminus N$ for all $g\in G$.
Now, replace $\pi(g)$ with $\pi(gt_g^{-1})\pi(t_g)$, which is a Borel map.
\end{proof}

\end{document}